\documentclass{louloupart}
\usepackage{pinlabel}
\usepackage{graphicx}
\usepackage[all]{xy}
\usepackage{tikz}
\usepackage{caption}
\usepackage[T1]{fontenc}
\title[Narrow normal subgroups of Coxeter groups]{Narrow normal subgroups of Coxeter groups and of automorphism groups of Coxeter groups}
\author[L Paris]{Luis Paris}
\givenname{Luis}
\surname{Paris}
\address{Luis Paris, IMB, UMR 5584, CNRS, Universit\'e Bourgogne, 21000 Dijon, France}
\email{lparis@u-bourgogne.fr}
\author[O Varghese]{Olga Varghese}
\givenname{Olga}
\surname{Varghese}
\address{Olga Varghese, Institute of Mathematics, Heinrich-Heine-University Düsseldorf, Universitätsstra{\upshape{\ss}}e 1, 40225, Düsseldorf, Germany}
\email{olga.varghese@hhu.de}

\subject{primary}{msc2000}{}
\keyword{un}

\newtheorem{thm}{Theorem}[section]
\newtheorem{lem}[thm]{Lemma}
\newtheorem{prop}[thm]{Proposition}
\newtheorem{corl}[thm]{Corollary}

\theoremstyle{definition}

\newtheorem*{acknow}{Acknowledgments}
\newtheorem*{expl}{Example}

\numberwithin{equation}{section}

\makeatletter
\renewcommand{\thefigure}{\ifnum \c@section>\z@ \thesection.\fi
 \@arabic\c@figure}
\@addtoreset{figure}{section}
\makeatother


\begin{document}

\def\N{\mathbb N} \def\R{\mathbb R} \def\GL{{\rm GL}}
\def\Z{\mathbb Z} \def\gen{{\rm gen}} \def\aff{{\rm aff}}
\def\sph{{\rm sph}} \def\Out{{\rm Out}} \def\K{\mathbb K}
\def\C{\mathbb C} \def\supp{{\rm supp}} \def\Pc{{\rm Pc}}
\def\Aut{{\rm Aut}} \def\Inn{{\rm Inn}} \def\SSS{\mathfrak S}
\def\Hom{{\rm Hom}} \def\Ker{{\rm Ker}} \def\Im{{\rm Im}}
\def\Q{\mathbb Q} \def\SS{\mathcal S} \def\II{\mathcal I}
\def\id{{\rm id}}


\begin{abstract}
By definition, a group is called narrow if it does not contain a copy of a non-abelian free group. 
We describe the structure of finite and narrow normal subgroups in Coxeter groups and their automorphism groups. 

\smallskip\noindent
{\bf AMS Subject Classification\ \ } 
Primary: 20F55; Secondary: 20F28

\smallskip\noindent
{\bf Keywords\ \ } 
Coxeter groups, automorphism groups of Coxeter groups, narrow normal subgroups, automatic continuity
\end{abstract}

\maketitle


\section{Introduction}\label{sec1}

A discrete group $G$ is called \emph{full-sized} if it contains a copy of the free group $F_2$ of rank $2$.
Otherwise $G$ is called \emph{narrow}.

The motivation for this paper comes from the so-called Tits alternative.
Recall that a group $G$ satis-\\fies the \emph{Tits alternative} if any narrow finitely generated subgroup of $G$ is virtually solvable.
This terminology comes from a celebrated theorem of Tits \cite{Tits1} which says that, for any field $\K$, $\GL_n(\K)$ satisfies the Tits alternative.
There are many other families of groups that satisfy the Tits alternative including hyperbolic groups (see Gromov \cite{Gromo1}), mapping class groups (see Ivanov \cite{Ivano1} and McCarthy \cite{McCar1}), $\Out(F_n)$ (see Bestvina--Feighn--Handel \cite{BeFeHa1,BeFeHa2}), groups acting on certain ``recurrent'' complexes of dimension $2$ (see Osajda--Przytycki \cite{OsaPrz1} ), and Coxeter groups (see Noskov--Vinberg \cite{NosVin1}).
In some cases the Tits alternative is more restrictive on narrow subgroups.
For example, any narrow subgroup of a hyperbolic group is virtually cyclic, and any narrow subgroup of a Coxeter group is virtually abelian (see Noskov--Vinberg \cite{NosVin1}).

In the present paper we are interested in a version of the Tits alternative for normal subgroups of Coxeter groups and of their automorphism groups.

A Coxeter group $W$ can be decomposed in the form $W=W_\gen\times W_\aff\times W_\sph$, where $W_\sph$ is a finite Coxeter group called the \emph{spherical part} of $W$, $W_\aff$ is an affine Coxeter group called the \emph{affine part} of $W$, and $W_\gen$ is an infinite non-affine Coxeter group called the \emph{generic part} of $W$. Our characterisation of narrow normal subgroups of Coxeter groups is the following.

\begin{thm}(see Theorem \ref{thm3_4})
Let $W$ be a Coxeter group and let $H$ be a normal subgroup of $W$.
\begin{itemize}
\item[(1)]
The group $H$ is finite if and only if $H$ is a subgroup of $W_\sph$.
\item[(2)]
The group $H$ is narrow if and only if $H$ is a subgroup of $W_\aff\times W_\sph$.
\end{itemize}
\end{thm}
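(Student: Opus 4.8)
The plan is to prove the two ``if'' implications directly — they require no normality — and to reduce the two ``only if'' implications, using the direct product structure of $W$, to two statements about irreducible Coxeter groups. For the ``if'' directions: $W_\sph$ is a direct product of finite irreducible Coxeter groups, hence finite, so every subgroup of $W_\sph$ is finite, giving one implication of~(1); and every irreducible affine Coxeter group is a semidirect product of a finite Coxeter group with a translation lattice $\cong\Z^n$, hence virtually abelian, so $W_\aff\times W_\sph$ is virtually abelian, and therefore every one of its subgroups is narrow (a virtually abelian group contains no copy of $F_2$), giving one implication of~(2).

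For the ``only if'' directions, decompose $W_\gen\times W_\aff$ as a direct product $\prod_i U_i$ of infinite irreducible Coxeter groups, those contained in $W_\gen$ being exactly the non-affine factors. If $H\trianglelefteq W$ and $p\colon W\to W_\gen\times W_\aff$ is the projection, then $p(H)$ is normal in $\prod_i U_i$ and, since each map $\prod_i U_i\to U_i$ is onto, every coordinate component of $p(H)$ is normal in the corresponding $U_i$; hence $p(H)=\{1\}$ — equivalently $H\le\Ker p=W_\sph$ — as soon as each such component is trivial, and likewise with $W_\gen$ in place of $W_\gen\times W_\aff$. It therefore suffices to prove (A): an infinite irreducible Coxeter group has no nontrivial finite normal subgroup; and (B): an infinite non-affine irreducible Coxeter group has no nontrivial narrow normal subgroup. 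Indeed, a finite normal $H\le W$ maps to a finite normal subgroup of $\prod_i U_i$, whose coordinate components are finite normal subgroups of infinite irreducible Coxeter groups and hence trivial by~(A), so $H\le W_\sph$; and a narrow normal $H\le W$ is virtually abelian by Noskov--Vinberg~\cite{NosVin1}, so its image in $W_\gen$ is virtually abelian — in particular narrow — and normal, with coordinate components that are narrow normal subgroups of infinite non-affine irreducible Coxeter groups and hence trivial by~(B), so $H\le W_\aff\times W_\sph$.

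Statement~(A) is classical. One argument: an infinite irreducible Coxeter group $W_0$ acts faithfully, properly and cocompactly on its Davis complex, a complete CAT(0) space $\Sigma$; a finite normal subgroup $N$ has nonempty convex fixed set $\mathrm{Fix}(N)$, which is $W_0$-invariant, and cocompactness together with irreducibility and the absence of a global fixed point force $\mathrm{Fix}(N)=\Sigma$, so $N=\{1\}$ by faithfulness. (In the affine case this reduces to the fact that an irreducible affine Coxeter group stabilises no proper affine subspace of $\R^n$; alternatively, (A) follows from the geometric representation of $W_0$ together with Maschke's theorem.)

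Statement~(B) carries the real content, and I expect it to be the main obstacle. Let $H\trianglelefteq W_0$, $H\ne\{1\}$, with $W_0$ infinite, non-affine and irreducible; by~(A) we may assume $H$ infinite. The key input is that such a $W_0$ is acylindrically hyperbolic with trivial finite radical — equivalently, its action on the Davis complex admits a rank-one (strongly contracting) element, and $W_0$ is not virtually cyclic. Granting this, the structure theory of normal subgroups of acylindrically hyperbolic groups shows that every nontrivial normal subgroup of $W_0$ is again acylindrically hyperbolic, hence contains a copy of $F_2$; so $H$ is full-sized, contradicting narrowness. Concretely, one produces an infinite-order element $h\in H$ together with a conjugate $ghg^{-1}\in H$ whose axis in $\Sigma$ is transverse to that of $h$, and applies the classical ping-pong lemma to suitable powers $h^n$ and $gh^ng^{-1}$. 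The delicate points — the existence of such a transverse conjugate, which rests on $W_0$ admitting no invariant quasi-flat and no invariant pair of boundary points, and the quantitative control needed to run ping-pong — constitute the heart of the Noskov--Vinberg analysis~\cite{NosVin1} and of the (by now standard) acylindrical hyperbolicity of non-affine irreducible Coxeter groups; this is where I expect the argument to require the most care.
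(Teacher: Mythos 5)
Your reduction is exactly the paper's: decompose $W$ into irreducible components, project the normal subgroup onto each component, and quote a triviality statement for the generic (and, for part (1), affine) factors; the ``if'' directions are the same easy observations. The difference lies in how you prove the two building blocks. The paper stays inside elementary Coxeter machinery: Qi's theorem that intersections of parabolics are parabolic, Lemma~\ref{lem3_1} (a normal parabolic subgroup of an irreducible Coxeter group is trivial or everything), Bourbaki's fact that a finite subgroup lies in a finite parabolic, and, for the generic case (Proposition~\ref{prop3_3}), Krammer's results on parabolic closures of $\Z^m$ and on centralizers together with Qi's identification of virtually cyclic irreducible Coxeter groups with the infinite dihedral group; along the way it obtains the sharper Proposition~\ref{prop3_2} (every nontrivial normal subgroup of an irreducible affine group is virtually $\Z^{n-1}$). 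You replace this by CAT(0) geometry of the Davis complex for (A) and by acylindrical hyperbolicity for (B). Your (B) is correct modulo the cited inputs (rank-one elements \`a la Caprace--Fujiwara, hence acylindrical hyperbolicity of infinite irreducible non-affine Coxeter groups, plus Osin's theorem that an infinite normal subgroup of an acylindrically hyperbolic group is acylindrically hyperbolic and so contains $F_2$), but these are considerably heavier tools than the paper's, and you cite rather than prove them, whereas the paper's argument is short and self-contained once Krammer's results are in hand. The one genuinely under-justified step is in your (A): concluding $\mathrm{Fix}(N)=\Sigma$ ``by cocompactness, irreducibility and the absence of a global fixed point'' really requires minimality of the $W_0$-action on the Davis complex, a true but nontrivial fact for infinite irreducible $W_0$ that your three hypotheses do not formally yield; the cheap repair is the paper's classical argument, namely that a finite normal $N$ lies in a finite parabolic, so $\Pc(N)$ is a finite normal parabolic subgroup, hence trivial by Lemma~\ref{lem3_1} since $W_0$ is infinite and irreducible.
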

In particular, if the spherical part of $W$ is trivial, then $W$ has no non-trivial finite normal subgroups, and if the spherical and the affine parts of $W$ are trivial, then $W$ has no non-trivial narrow normal subgroups.

We start our study of $\Aut(W)$ by determining a decomposition of $\Aut(W)$ of the form $$\Aut(W)=\Ker(\Phi)\rtimes(\Aut(W_\gen)\times\Aut(W_\aff)),$$ where $\Ker(\Phi)$ is a finite group containing $\Aut(W_\sph)$ (see Equation (\ref{eq4_1})). Then we show:

\begin{thm}(see Theorem \ref{thm4_8})
Let $W$ be a Coxeter group and let $H$ be a normal subgroup of $\Aut(W)$.
\begin{itemize}
\item[(1)]
The group $H$ is narrow if and only if $H$ is a subgroup of $\Ker(\Phi)\rtimes\Aut(W_\aff)$.
In that case $H$ is virtually $\Z^m$ for some $m\ge0$.
\item[(2)]
The group $H$ is finite if and only if $H$ is a subgroup of $\Ker(\Phi)$.
\end{itemize}
\end{thm}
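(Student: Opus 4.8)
The plan is to transfer Theorem~\ref{thm3_4} to the automorphism group through the decomposition $\Aut(W)=\Ker(\Phi)\rtimes(\Aut(W_\gen)\times\Aut(W_\aff))$ of Equation~(\ref{eq4_1}). Let $\pi\colon\Aut(W)\to\Aut(W_\gen)\times\Aut(W_\aff)$ be the associated projection, whose kernel is the finite group $\Ker(\Phi)$, and let $p_\gen$ and $p_\aff$ be the two coordinate projections of $\Aut(W_\gen)\times\Aut(W_\aff)$. I would use two facts about the factors. First, since $W_\gen$ is a direct product of infinite non-affine irreducible Coxeter groups, its own spherical and affine parts are trivial, so Theorem~\ref{thm3_4} shows that $W_\gen$ has neither a non-trivial finite normal subgroup nor a non-trivial narrow normal subgroup; similarly $W_\aff$ is a direct product of irreducible affine Coxeter groups, its spherical part is trivial, and $W_\aff$ has no non-trivial finite normal subgroup. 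Second, $W_\gen$ and $W_\aff$ are centreless, being direct products of infinite irreducible Coxeter groups; hence $\Inn(W_\gen)\cong W_\gen$ and $\Inn(W_\aff)\cong W_\aff$, and since an automorphism of a centreless group commuting with every inner automorphism is the identity, the centraliser of $\Inn(W_\gen)$ in $\Aut(W_\gen)$, and of $\Inn(W_\aff)$ in $\Aut(W_\aff)$, is trivial.

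From these I would first establish two rigidity statements: $\Aut(W_\gen)$ has no non-trivial narrow normal subgroup, and neither $\Aut(W_\gen)$ nor $\Aut(W_\aff)$ has a non-trivial finite normal subgroup. The argument is uniform: if $N$ is a narrow (respectively finite) normal subgroup of $\Aut(W_\gen)$, then $N\cap\Inn(W_\gen)$ is a narrow (respectively finite) normal subgroup of $\Inn(W_\gen)\cong W_\gen$, hence trivial by the first fact above; as $N$ and $\Inn(W_\gen)$ are normal with trivial intersection, $N$ centralises $\Inn(W_\gen)$, so $N$ lies in a trivial centraliser and $N=1$. The same computation with $W_\aff$ in place of $W_\gen$ handles finite normal subgroups of $\Aut(W_\aff)$.

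The forward implications are then immediate. If $H$ is a narrow (respectively finite) normal subgroup of $\Aut(W)$, then $\pi(H)$ is a narrow (respectively finite) normal subgroup of $\Aut(W_\gen)\times\Aut(W_\aff)$, so $p_\gen(\pi(H))$ is a narrow (respectively finite) normal subgroup of $\Aut(W_\gen)$, hence trivial; therefore $\pi(H)$ is contained in the second factor $\Aut(W_\aff)$, and if $H$ is finite then $p_\aff(\pi(H))$ is a finite normal subgroup of $\Aut(W_\aff)$, also trivial, so $\pi(H)=1$. Pulling back along $\pi$ yields $H\le\pi^{-1}(\Aut(W_\aff))=\Ker(\Phi)\rtimes\Aut(W_\aff)$ in the narrow case and $H\le\Ker(\Phi)$ in the finite case. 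The converse in~(2) is trivial, since $\Ker(\Phi)$ is finite.

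It remains to prove the converse in~(1) together with the quantitative claim. For this I would use that $\Aut(W_\aff)$ is virtually $\Z^m$ for some $m\ge0$: this can be read off the structural description of $\Aut(W_\aff)$ obtained in the sections preceding Equation~(\ref{eq4_1}), or proved directly from $\Inn(W_\aff)\cong W_\aff$ being virtually $\Z^m$ together with $\Out(W_\aff)$ being finite (diagram automorphisms and weight-lattice translations being finite in number). Granting this, choose a finite-index subgroup $A_0\cong\Z^m$ of $\Aut(W_\aff)$ lying in the kernel of the finite-image conjugation action on $\Ker(\Phi)$; then $\Ker(\Phi)\,A_0=\Ker(\Phi)\times A_0$ is a finite-index subgroup of $\Ker(\Phi)\rtimes\Aut(W_\aff)$, and it is virtually $\Z^m$, so $\Ker(\Phi)\rtimes\Aut(W_\aff)$ is virtually $\Z^m$ and hence so is every subgroup $H$ of it, of rank at most $m$; in particular such an $H$ is narrow. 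This gives~(1). The step I expect to be the main obstacle is obtaining a description of $\Aut(W_\aff)$ precise enough to secure ``virtually $\Z^m$'' rather than merely ``narrow'' or ``virtually polycyclic''; the remainder is a formal consequence of Theorem~\ref{thm3_4}, the vanishing of the relevant centralisers, and the semidirect decomposition.
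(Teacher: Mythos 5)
Your proposal is correct, and its skeleton is the same as the paper's: use the decomposition $\Aut(W)=\Ker(\Phi)\rtimes(\Aut(W_\gen)\times\Aut(W_\aff))$ of Equation (\ref{eq4_1}), show the generic factor admits no non-trivial narrow normal subgroup (and the affine factor no non-trivial finite one) to get the forward implications, and deduce the converse of (1) from $\Ker(\Phi)\rtimes\Aut(W_\aff)$ being virtually $\Z^m$. Where you genuinely diverge is in how the rigidity of the factors is proved. The paper only has Propositions \ref{prop4_2} and \ref{prop4_3} for \emph{irreducible} affine or generic groups, so inside the proof of Theorem \ref{thm4_8} it reduces to components via the permutation homomorphisms $\beta$, $\alpha$ furnished by Theorem \ref{thm4_4} and rules out component-permuting elements by an explicit commutator computation with a suitable inner automorphism $\gamma_w$. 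You instead apply Theorem \ref{thm3_4} (or Corollary \ref{corl3_5}) directly to the possibly reducible Coxeter groups $W_\gen$ and $W_\aff$ (whose spherical, resp.\ spherical and affine, parts vanish and which are centreless by Proposition \ref{prop2_3}), and then use once the general principle that two normal subgroups with trivial intersection commute, so a narrow/finite normal $N\le\Aut(G)$ with $N\cap\Inn(G)=\{1\}$ lies in $C_{\Aut(G)}(\Inn(G))=\{1\}$ for centreless $G$ --- this is exactly the mechanism of the paper's Proposition \ref{prop4_3} (and of the cited M\"oller--Varghese lemma), but promoted to the reducible setting, which lets you skip the $\beta$/$\alpha$ case analysis in this step. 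What your route does \emph{not} avoid is the structural input for the quantitative claim: ``$\Aut(W_\aff)$ is virtually $\Z^m$'' still needs Theorem \ref{thm4_4} together with Franzsen--Howlett (Theorem \ref{thm4_1}/Proposition \ref{prop4_2}), as you acknowledge --- your parenthetical ``$\Out(W_\aff)$ finite because diagram automorphisms and weight-lattice translations are finite in number'' is loose for a reducible $W_\aff$, but deferring to the paper's earlier structure results closes this. Two minor remarks: your intermediate subgroup $\Ker(\Phi)\,A_0$ is unnecessary, since $\Ker(\Phi)$ being finite already makes $\Aut(W_\aff)$ of finite index in $\Ker(\Phi)\rtimes\Aut(W_\aff)$; and the fact you use implicitly, that a surjective image of a narrow group is narrow (a lift of a free subgroup generates a group surjecting onto $F_2$, hence containing $F_2$), is the same fact the paper uses implicitly, so no gap there.
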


In particular, if $W$ has no spherical part, then $\Aut(W)$ has no non-trivial finite normal subgroups, and if $W$ has no spherical part and no affine part, then $\Aut(W)$ has no non-trivial narrow normal subgroups.
Further any narrow normal subgroup of $\Aut(W)$ is virtually abelian of finite rank.
Note that we know from Noskov--Vinberg \cite{NosVin1} that any narrow subgroup of $W$ is virtually abelian of finite rank, without the assumption being normal, but we do not know if any narrow subgroup of $\Aut(W)$ is virtually abelian of finite rank in general.

The characterization of narrow normal subgroups of Coxeter groups and their automorphism groups has an application in the direction of automatic continuity (see Chapter \ref{sec5}).

\begin{acknow}
We want to thank Philip M\"oller for useful comments on the previous version of this paper and the referee for many helpful remarks. The first author is supported by the French project ``AlMaRe'' (ANR-19-CE40-0001-01) of the ANR. The second author is supported by DFG grant VA 1397/2-2.
\end{acknow}


\section{Preliminaries}\label{sec2}

Let $S$ be a finite set.
A \emph{Coxeter matrix} over $S$ is a square matrix $M=(m_{s,t})_{s,t\in S}$ indexed by the elements of $S$, with coefficients in $\N\cup\{\infty\}$, such that $m_{s,s}=1$ for all $s\in S$ and $m_{s,t}=m_{t,s}\ge 2$ for all $s,t\in S$, $s\neq t$.
We represent such a matrix by a labeled graph, $\Gamma$, called a \emph{Coxeter graph}, and which is defined as follows:
The set of vertices of $\Gamma$ is $S$.
Two distinct vertices $s,t\in S$ are connected by an edge if $m_{s,t}\neq 2$, and this edge is labeled by $m_{s,t}$ if $m_{s,t}\ge 4$. 

Let $\Gamma$ be a Coxeter graph and let $M=(m_{s,t})_{s,t\in S}$ be its Coxeter matrix.
The \emph{Coxeter group} $W=W[\Gamma]$ associated with $\Gamma$ is defined by the following presentation.
\[
W[\Gamma]=\langle S\mid s^2=1\text{ for all }s\in S\,,\ (st)^{m_{s,t}}=1\text{ for all } s,t\in S,\ s\neq t,\ m_{s,t}\neq\infty\rangle\,.
\]
The pair $(W,S)$ is called the \emph{Coxeter system associated} with $\Gamma$.

Let $\Gamma$ be a Coxeter graph and let $M=(m_{s,t})_{s,t\in S}$ be its Coxeter matrix.
We say that $W=W[\Gamma]$ is \emph{irreducible} if $\Gamma$ is connected.
For $X\subset S$ we denote by $W_X$ the subgroup of $W=W[\Gamma]$ generated by $X$ and by $\Gamma_X$ the full subgraph of $\Gamma$ spanned by $X$.
We know by Bourbaki \cite[p. 20]{Bourb1} that the natural homomorphism $W[\Gamma_X]\to W_X$ which sends $s$ to $s$ for all $s\in X$ is an isomorphism.
A subgroup of the form $W_X$ is called a \emph{standard parabolic subgroup} and a subgroup conjugate to $W_X$ is simply called a \emph{parabolic subgroup}.

Let $\Gamma_1,\dots,\Gamma_\ell$ be the connected components of $\Gamma$ and, for $i\in\{1,\dots,\ell\}$, let $X_i$ be the set of vertices of $\Gamma_i$.
Then 
\[
W=W_{X_1}\times W_{X_2}\times\cdots\times W_{X_\ell}\,,
\]
and each $W_{X_i}$ is an irreducible Coxeter group.
The subgroups $W_{X_i}$ are called the \emph{irreducible components} of $W$.

Let $\Gamma$ be a Coxeter graph and let $M=(m_{s,t})_{s,t\in S}$ be its Coxeter matrix.
Let $\Pi=\{\alpha_s\mid s\in S\}$ be an abstract set in one-to-one correspondence with $S$ and let $V=\bigoplus_{s\in S}\R\alpha_s$ be the real vector space having $\Pi$ as a basis.
The \emph{canonical form} of $W=W[\Gamma]$ is the symmetric bilinear form $\langle\cdot\mid\cdot\rangle$ defined on $V$ by
\[
\langle\alpha_s\mid\alpha_t\rangle=\left\{\begin{array}{ll}
-\cos(\pi/m_{s,t})&\text{if }m_{s,t}\neq\infty\,,\\
-1&\text{if }m_{s,t}=\infty\,.
\end{array}\right.
\]
To each $s\in S$ we associate a linear reflection $\rho_s:V\to V$ defined by
\[
\rho_s(v)=v-2\langle v\mid\alpha_s\rangle\,\alpha_s\,.
\]
We have a linear representation $\rho:W\to\GL(V)$ which sends $s$ to $\rho_s$ for all $s\in S$.
This representation is faithful (see Bourbaki \cite[p. 91]{Bourb1}) and it is called the \emph{canonical representation} of $W$.

Suppose that $\Gamma$ is a connected Coxeter graph, that is, $W=W[\Gamma]$ is irreducible.
Let $\langle\cdot\mid\cdot\rangle$ be the canonical form of $W$.
We say that $W$ (or $\Gamma$) is of \emph{spherical type} if $\langle\cdot\mid\cdot\rangle$ is positive definite and that $W$ (or $\Gamma$) is of \emph{affine type} if $\langle\cdot\mid\cdot\rangle$ is positive but not positive definite.
It is known that $W$ is of spherical type if and only if $W$ is finite.
On the other hand, if $W$ is of affine type, then $W$ is a semi-direct product $W=\Z^{n-1}\rtimes W_0$, where $n=|S|$ and $W_0$ is a Weyl group, which is a finite Coxeter group.
We refer to Bourbaki \cite{Bourb1}, Humphreys \cite{Humph1} or Davis \cite{Davis1} for a detailed account on these groups.
We will say that $W$ is \emph{generic} if it is neither of spherical type nor of affine type.

Suppose now that $\Gamma$ is arbitrary.
Let $W_{X_1},W_{X_2},\dots,W_{X_\ell}$ be the irreducible components of $W$.
We say that $W_{X_i}$ is a \emph{spherical component} of $W$ if $W_{X_i}$ is of spherical type, that $W_{X_i}$ is an \emph{affine component} of $W$ if $W_{X_i}$ is of affine type, and that $W_{X_i}$ is a \emph{generic component} of $W$ if $W_{X_i}$ is generic.
Without loss of generality we can assume that $W_{X_1},\dots,W_{X_p}$ are the generic components of $W$, that $W_{X_{p+1}},\dots,W_{X_q}$ are the affine components of $W$, and that $W_{X_{q+1}},\dots,W_{X_\ell}$ are the spherical components of $W$, where $0\le p\le q\le\ell$.
Then $W_\gen=W_{X_1}\times\cdots\times W_{X_p}$ is the \emph{generic part} of $W$, $W_\aff=W_{X_{p+1}}\times\dots\times W_{X_q}$ is the \emph{affine part} of $W$, and $W_\sph=W_{X_{q+1}}\times\cdots\times W_{X_\ell}$ is the \emph{spherical part} of $W$.
Note that $W=W_\gen\times W_\aff\times W_\sph$.

\begin{expl}
Let us consider the Coxeter graph $\Gamma$ in Figure \ref{fig2_1}.
The spherical part of $W[\Gamma]$ is the standard parabolic subgroup that is generated by $\left\{ s_9,s_{10}\right\}$, the affine part is the standard parabolic subgroup that is generated by $\left\{s_1, s_2, s_3, s_4, s_5\right\}$ and the generic part is generated by $\left\{s_6, s_7, s_8\right\}$.

\begin{figure}[ht!]
	\begin{center}
		\begin{tikzpicture}
			\draw[fill=black]  (0,0) circle (2pt);
			\draw[fill=black]  (2,0) circle (2pt);		
			\node at (0,-0.4) {$s_1$}; 
			\node at (2,-0.4) {$s_2$};		
			\draw (0,0)--(2,0);	
			\node at (1,0.2) {$\infty$};
			
			\node at (0,-0.4) {$s_1$}; 
			\node at (2,-0.4) {$s_2$};
			
			\draw[fill=black]  (3,0) circle (2pt);
			\node at (3,-0.4) {$s_3$}; 
			\draw[fill=black]  (5,0) circle (2pt);
			\node at (5,-0.4) {$s_4$}; 
			\draw[fill=black]  (4,1.5) circle (2pt);
			\node at (4,1.8) {$s_5$}; 
			\draw (3,0)--(5,0);
			\draw (3,0)--(4,1.5);
			\draw (5,0)--(4,1.5);
			
			\draw[fill=black]  (6,0) circle (2pt);
			\node at (6,-0.4) {$s_6$}; 
			\draw[fill=black]  (8,0) circle (2pt);
			\node at (8,-0.4) {$s_7$}; 
			\draw[fill=black]  (7,1.5) circle (2pt);
			\node at (7,1.8) {$s_8$}; 
			\draw (6,0)--(8,0);
			\draw (6,0)--(7,1.5);
			\draw (8,0)--(7,1.5);
			\node at (7,-0.2){$4$};
			\node at (6.3,1){$4$};
			\node at (7.7,1){$4$};
			
			\draw[fill=black]  (9,0) circle (2pt);
			\node at (9,-0.4) {$s_9$}; 
			\draw[fill=black]  (11,0) circle (2pt);
			\node at (11,-0.4) {$s_{10}$};
			\draw (9,0)--(11,0);

		\end{tikzpicture}
	\caption{Coxeter graph $\Gamma$}\label{fig2_1}
	\end{center}
\end{figure}
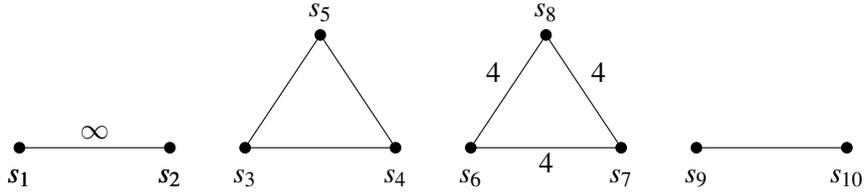
\end{expl}

Let $(W,S)$ be a Coxeter system.
The word length with respect to $S$ of an element $w\in W$ is denoted by $\ell_S(w)$.
A \emph{reduced expression} of $w$ is an expression $w=s_1s_2\cdots s_\ell$ of $w$ over $S$ such that $\ell_S(w)=\ell$.
If $w=s_1s_2\cdots s_\ell$ is a reduced expression of $w$, then the \emph{support} of $w$ is $\supp(w)=\{s_1,s_2,\dots,s_\ell\}$.
We know by \cite{Bourb1}, \cite[Prop. 4.1.1]{Davis1} that this set does not depend on the choice of the reduced expression.
Let $X,Y\subset S$ and $w\in W$.
We say that $w$ is \emph{$(X,Y)$-minimal} if it has minimal length in the double coset $W_XwW_Y$.

\begin{prop}(\cite{Bourb1}, \cite[Lemma 4.3.1]{Davis1})
\label{prop2_1}
Let $(W,S)$ be a Coxeter system, let $X,Y$ be two subsets of $S$, and let $w$ be an $(X,Y)$-minimal element of $W$.
Then:
\begin{itemize}
\item[(1)]
$w$ is the only $(X,Y)$-minimal element lying in $W_XwW_Y$,
\item[(2)]
$\ell_S(uw)=\ell_S(u)+\ell_S(w)$ for all $u\in W_X$,
\item[(3)]
$\ell_S(wv)=\ell_S(w)+\ell_S(v)$ for all $v\in W_Y$.
\end{itemize}
\end{prop}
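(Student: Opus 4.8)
The plan is to deduce the proposition from a one-sided statement together with an induction based on the Exchange Condition. I will use three standard facts about the Coxeter system $(W,S)$: length is additive modulo $2$, so $\ell_S(sw)=\ell_S(w)\pm1$ for $s\in S$; every reduced expression of an element $u\in W_X$ uses only letters of $X$, since $\supp(u)\subseteq X$; and the Exchange Condition — if $w=s_1\cdots s_m$ is reduced and $\ell_S(sw)<\ell_S(w)$ then $sw=s_1\cdots\widehat{s_i}\cdots s_m$ for some $i$ — together with its right-multiplication mirror. Observe first that if $w$ is $(X,Y)$-minimal then, since $W_Xw\subseteq W_XwW_Y\supseteq wW_Y$, the element $w$ has minimal length in the coset $W_Xw$ and in the coset $wW_Y$; so statements $(2)$ and $(3)$ will follow at once from a one-sided result, and only $(1)$ will need an extra argument.

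The one-sided result I would establish is: \emph{if $w$ has minimal length in $W_Xw$, then $\ell_S(uw)=\ell_S(u)+\ell_S(w)$ for all $u\in W_X$}; consequently $w$ is the unique minimal-length element of $W_Xw$, and this property is equivalent to $\ell_S(sw)>\ell_S(w)$ for every $s\in X$. I would prove the displayed formula by induction on $\ell_S(u)$. Write $u=s_1u'$ with $s_1\in X$ and $u'\in W_X$, $\ell_S(u')=\ell_S(u)-1$; by induction $\ell_S(u'w)=\ell_S(u')+\ell_S(w)$, so concatenating a reduced expression of $u'$ with one of $w$ gives a reduced expression of $u'w$. Now $uw=s_1(u'w)$ has length $\ell_S(u'w)\pm1$; if it equals $\ell_S(u'w)+1$ we are done, and otherwise the Exchange Condition deletes a single letter of that reduced expression. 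Deleting a letter of the $u'$-block would give $\ell_S(s_1u')\le\ell_S(u')-1$, contradicting $\ell_S(s_1u')=\ell_S(u)$; deleting a letter of the $w$-block produces $(u'^{-1}s_1u')w$, which lies in $W_Xw$ and has length strictly less than $\ell_S(w)$, contradicting minimality. Uniqueness and the generator characterisation follow routinely from the formula, and the mirror statement, for $wW_Y$ and $v\in W_Y$, follows by applying the lemma to $w^{-1}$ and inverting.

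For $(1)$, let $w'$ be a second $(X,Y)$-minimal element; then $w'=uwv$ for some $u\in W_X$, $v\in W_Y$, and I choose such a representation with $\ell_S(u)$ minimal. The key claim is that $uw$ then has minimal length in $(uw)W_Y$. If not, some $t\in Y$ satisfies $\ell_S(uwt)<\ell_S(uw)=\ell_S(u)+\ell_S(w)$; since $w$ is right-minimal, $\ell_S(wt)=\ell_S(w)+1$, so $wt$ violates the one-sided length formula against $u$, hence is not of minimal length in $W_X(wt)$, which gives $s\in X$ with $\ell_S(swt)=\ell_S(w)$. Applying the right Exchange Condition to the reduced expression $sq_1\cdots q_m$ of $sw$, where $w=q_1\cdots q_m$ is reduced, and the letter $t$: deleting any $q_j$ would force $\ell_S(wt)\le m-1<\ell_S(w)$, impossible since $t\in Y$; so the $s$ is deleted, i.e. $sw=wt$. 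Then $uwt=(us)w$ has length $\ell_S(us)+\ell_S(w)$, forcing $\ell_S(us)<\ell_S(u)$, so $u=u_1s$ with $\ell_S(u_1)<\ell_S(u)$, and $w'=u_1w(tv)$ is a representation with $\ell_S(u_1)<\ell_S(u)$ — contradicting minimality. Hence $uw$ is minimal in $(uw)W_Y$, so by the mirror one-sided result $\ell_S(w')=\ell_S(uwv)=\ell_S(uw)+\ell_S(v)=\ell_S(u)+\ell_S(w)+\ell_S(v)$; as $\ell_S(w')=\ell_S(w)$, this forces $u=v=1$, so $w'=w$.

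The routine parts are the standard facts quoted at the outset and the bookkeeping in the Exchange-Condition induction. The delicate point — and the main obstacle — is $(1)$: one cannot argue symmetrically, because the two-sided formula $\ell_S(uwv)=\ell_S(u)+\ell_S(w)+\ell_S(v)$ is \emph{false} for general $u\in W_X$, $v\in W_Y$ (there can be cancellation across $w$), so a straight two-sided computation does not yield uniqueness. The device of passing to a representation $w'=uwv$ with $\ell_S(u)$ minimal is precisely what makes $uw$ right-minimal and restores additivity, and getting this interaction of the two sides right is the part I expect to require the most care.
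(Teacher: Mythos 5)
Your proof is correct. Note that the paper itself gives no proof of Proposition 2.1 — it is quoted as a known preliminary from Bourbaki and from Davis's Lemma 4.3.1 — so there is no internal argument to compare against; your two-step argument (the one-sided coset lemma $\ell_S(uw)=\ell_S(u)+\ell_S(w)$ proved by induction with the Exchange Condition, then double-coset uniqueness via a representation $w'=uwv$ with $\ell_S(u)$ minimal and the resulting identity $sw=wt$) is a sound, self-contained derivation along the same standard lines as those references.
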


Related to the above proposition is the following.

\begin{prop}(\cite[Ex. 22, p. 43]{Bourb1}, \cite[Lemma 4.6.1]{Davis1})
\label{prop2_2}
Let $W=W[\Gamma]$ be an irreducible Coxeter group.
The following conditions on an element $w_0\in W$ are equivalent.
\begin{itemize}
\item[(a)]
For each $u\in W$, $\ell_S(w_0)=\ell_S(u)+\ell_S(u^{-1}w_0)$,
\item[(b)]
For each $s\in S$, $\ell_S(w_0)>\ell_S(sw_0)$.
\end{itemize}
Moreover, $w_0$ exists if and only if $W$ is of spherical type.
If $w_0$ satisfies (a) and/or (b), then $w_0$ is unique, $w_0$ is an involution, and $w_0Sw_0=S$.
\end{prop}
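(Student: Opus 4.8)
The plan is to recognise that conditions (a) and (b) are the two familiar characterisations of the \emph{longest element} of a finite Coxeter group, and to derive the proposition from the classical structure theory of that element. The implication $(a)\Rightarrow(b)$ is immediate: taking $u=s$ in (a) gives $\ell_S(w_0)=\ell_S(s)+\ell_S(s^{-1}w_0)=1+\ell_S(sw_0)$, so $\ell_S(sw_0)<\ell_S(w_0)$ for every $s\in S$. The rest of the proof splits into showing that (b) forces $W$ to be finite, and then reading off the remaining assertions from the theory of the longest element.

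For the first point I would use the canonical representation $\rho$ and the standard combinatorics of the root system $\Phi=\{\rho(w)(\alpha_s)\mid w\in W,\ s\in S\}$ of $(W,S)$ (Bourbaki~\cite{Bourb1}, Davis~\cite{Davis1}): every root is either a non-negative or a non-positive real combination of the $\alpha_s$, so $\Phi=\Phi^+\sqcup\Phi^-$; one has $\ell_S(sw)<\ell_S(w)$ if and only if $\rho(w)^{-1}(\alpha_s)\in\Phi^-$; and $\ell_S(w)=\#\{\beta\in\Phi^+\mid\rho(w)^{-1}(\beta)\in\Phi^-\}$. Condition (b) says precisely that $\rho(w_0)^{-1}$ carries every simple root into $\Phi^-$, and since the positive roots are the non-negative combinations of the simple roots it follows that $\rho(w_0)^{-1}(\Phi^+)\subseteq\Phi^-$. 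Hence every positive root is among those counted by the length formula for $w_0$, so $\#\Phi^+\le\ell_S(w_0)<\infty$; thus $\Phi$ is finite, and since $W$ acts faithfully on $\Phi$, $W$ is finite, i.e. of spherical type. (This is essentially the usual argument that the left descent set of any element of a Coxeter group spans a finite standard parabolic subgroup, applied with descent set $S$.)

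With $W$ finite, $w_0$ is an element of maximal $S$-length, and the remaining claims are the classical properties of the longest element $w_{\max}$ of a finite Coxeter group (Bourbaki~\cite[Ex.~22, p.~43]{Bourb1}, Humphreys~\cite{Humph1}, Davis~\cite[Lemma~4.6.1]{Davis1}), which I would either cite or reprove as follows. First, the element of maximal length is unique, so $w_0=w_{\max}$ is uniquely determined by (b); secondly it satisfies the length-reversal identity $\ell_S(uw_0)=\ell_S(w_0)-\ell_S(u)$ for all $u\in W$. Replacing $u$ by $u^{-1}$ turns this into $\ell_S(u)+\ell_S(u^{-1}w_0)=\ell_S(w_0)$, which is (a), and also shows that a $w_0$ satisfying (a) is unique. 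Since $\ell_S(w_0^{-1})=\ell_S(w_0)$ is again maximal, uniqueness gives $w_0^{-1}=w_0$, so $w_0$ is an involution. For $s\in S$, maximality forces $\ell_S(w_0s)=\ell_S(w_0)-1$, and length reversal applied to $u=w_0s$ gives $\ell_S(w_0sw_0)=\ell_S(w_0)-\ell_S(w_0s)=1$, so $w_0sw_0\in S$; as conjugation by $w_0$ is a bijection of $W$ and $S$ is finite, $w_0Sw_0=S$. Finally the ``moreover'' equivalence follows: if such a $w_0$ exists then (b) holds and the previous paragraph gives $W$ of spherical type, while conversely a Coxeter group of spherical type is finite and its longest element satisfies (a) and (b).

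The one genuinely non-formal ingredient is the length-reversal identity $\ell_S(uw_0)=\ell_S(w_0)-\ell_S(u)$ (equivalently, that the longest left divisor of an element is the longest element of the standard parabolic on its left descent set), which rests on the Exchange/Deletion condition rather than on manipulation of the length function: a naive induction on $\ell_S(u)$, writing $u=su'$ with $s$ a left descent, stalls because $s$ need not remain a descent of the intermediate element $u'w_0$. As the proposition is attributed above to Bourbaki and Davis, I would simply cite this part, using the root-theoretic argument only to make transparent why (b) already implies finiteness.
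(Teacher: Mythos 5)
Your proposal is correct, but note that the paper does not prove Proposition \ref{prop2_2} at all: it is stated as background and attributed entirely to Bourbaki (Ex.~22, p.~43) and Davis (Lemma~4.6.1), so there is no internal argument to compare against. Measured against that, your sketch does strictly more than the paper. The step $(a)\Rightarrow(b)$ via $u=s$ is right, and your root-system argument that (b) forces finiteness is the standard correct one: with the facts $\Phi=\Phi^+\sqcup\Phi^-$, the descent criterion $\ell_S(sw)<\ell_S(w)\Leftrightarrow\rho(w)^{-1}(\alpha_s)\in\Phi^-$, and $\ell_S(w)=\#\{\beta\in\Phi^+\mid\rho(w)^{-1}(\beta)\in\Phi^-\}$, condition (b) gives $\rho(w_0)^{-1}(\Phi^+)\subseteq\Phi^-$, hence $\#\Phi^+\le\ell_S(w_0)<\infty$; faithfulness of the action on $\Phi$ (immediate since $\Pi\subseteq\Phi$ and $\rho$ is faithful, a point worth one sentence) then gives $W$ finite, i.e.\ spherical since $\Gamma$ is connected. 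Your computation also shows $\ell_S(w_0)=\#\Phi^+$ is maximal, so the derivation of uniqueness, the involution property, $w_0Sw_0=S$, and (a) from the length-reversal identity $\ell_S(uw_0)=\ell_S(w_0)-\ell_S(u)$ is sound. The only caveat is that this identity (and uniqueness of the maximal-length element) is essentially the content of the very result being proved, so citing it to Bourbaki/Davis makes that portion of your argument a citation rather than a proof from scratch; since the paper itself treats the whole proposition as a citation, this is acceptable, and you flag it honestly, but if you wanted a self-contained proof you would need to supply the standard argument (e.g.\ via $N(w)=\{\beta\in\Phi^+\mid w(\beta)\in\Phi^-\}$: maximal length forces $N(w_0^{-1})=\Phi^+$, whence uniqueness and $\ell_S(uw_0)=\#\Phi^+-\ell_S(u)$ by counting inversions) rather than the naive length induction you correctly identify as stalling.
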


The element $w_0$ of Proposition \ref{prop2_2} is called the \emph{longest element} of $W$, if it exists.
The center of a group $G$ will be denoted by $Z(G)$.
The following result can be found in \cite[Ch. V, \S 4, Exercise 3]{Bourb1}.

\begin{prop}\label{prop2_3}
Let $W=W[\Gamma]$ be an irreducible Coxeter group.
\begin{itemize}
\item[(1)]
If either $W$ is of affine type or $W$ is generic, then $Z(W)=\{1\}$.
\item[(2)]
Suppose that $W$ is of spherical type.
Let $w_0$ be the longest element of $W$.
Then $Z(W)=\{1,w_0\}$ if $w_0$ is central, and $Z(W)=\{1\}$ otherwise.
\end{itemize}
\end{prop}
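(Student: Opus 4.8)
The plan is to analyse $Z(W)$ through the canonical representation $\rho\colon W\to\GL(V)$, which is faithful. Fix $z\in Z(W)$. For every reflection $r$ of $W$ (a conjugate of an element of $S$), $\rho(r)$ is a reflection whose $(-1)$-eigenspace is a line $\R\beta_r$ spanned by a root $\beta_r$ with $\langle\beta_r\mid\beta_r\rangle=1$, the $(+1)$-eigenspace being the orthogonal hyperplane. Since $z$ is central, $\rho(z)$ commutes with $\rho(r)$, hence preserves both eigenspaces; preserving the line $\R\beta_r$ together with the canonical form forces $\rho(z)\beta_r=\pm\beta_r$. Applying this with $r=s$ for each $s\in S$ yields $\rho(z)\alpha_s=\epsilon_s\alpha_s$ with $\epsilon_s\in\{1,-1\}$.

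Next I would propagate the signs along the edges of $\Gamma$. If $s,t\in S$ with $m_{s,t}\ge 3$, then $sts$ is again a reflection, with root $\rho_s(\alpha_t)=\alpha_t-2\langle\alpha_t\mid\alpha_s\rangle\alpha_s$, and here $\langle\alpha_t\mid\alpha_s\rangle\neq0$. Computing $\rho(z)\bigl(\rho_s(\alpha_t)\bigr)=\epsilon_t\alpha_t-2\langle\alpha_t\mid\alpha_s\rangle\epsilon_s\alpha_s$ and comparing it with $\pm\rho_s(\alpha_t)$ in the basis $\Pi$ (the coefficient of $\alpha_s$ being nonzero) forces $\epsilon_s=\epsilon_t$. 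As $\Gamma$ is connected, all the $\epsilon_s$ coincide with a single $\epsilon\in\{1,-1\}$, so $\rho(z)=\epsilon\,\id_V$. If $\epsilon=1$, then $z=1$ by faithfulness; so from now on assume $\epsilon=-1$, i.e. $\rho(z)=-\id_V$.

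Finally I would show that $\rho(z)=-\id_V$ forces $W$ to be finite (equivalently, of spherical type): $-\id_V$ sends every positive root to a negative one, whereas the number of positive roots made negative by an element $w\in W$ equals $\ell_S(w)<\infty$; hence there are only finitely many positive roots and $W$ is finite. (Equivalently: $-\id_V\in\rho(W)$ maps the fundamental chamber onto its antipode, which belongs to the Tits cone only when $W$ is finite.) This already yields (1): if $W$ is affine or generic it is infinite, so $\epsilon=-1$ is impossible and $Z(W)=\{1\}$. For (2), with $W$ of spherical type, the above gives $Z(W)\subseteq\{1,w_0\}$: if $z\neq1$ then $\rho(z)=-\id_V$ maps the fundamental chamber $\overline C$ to $-\overline C$, exactly as $\rho(w_0)$ does, and since $W$ acts simply transitively on chambers we get $z=w_0$. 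Hence $Z(W)=\{1,w_0\}$ when $w_0$ is central, and $Z(W)=\{1\}$ otherwise.

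The main obstacle is the eigenvalue analysis of $\rho(z)$ — obtaining $\rho(z)=\pm\id_V$ via the propagation of signs along $\Gamma$, then cleanly excluding $\rho(z)=-\id_V$ in the infinite case; once $\rho(z)=\pm\id_V$ is established, both parts follow by short arguments. As an alternative for the affine case one can bypass the representation entirely: writing $W=\Z^{n-1}\rtimes W_0$, a central element commutes with all translations, hence has trivial linear part and is itself a translation, and commuting with the irreducible finite group $W_0$ (which fixes no nonzero vector) forces it to be trivial.
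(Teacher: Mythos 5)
Your argument is correct. Note, however, that the paper does not prove this proposition at all: it simply cites Bourbaki, Ch.~V, \S 4, Exercise~3, so your proposal is not so much a different route as a full reconstruction of the standard argument behind that citation. The structure you use --- a central element $z$ must act on each simple root by a sign $\epsilon_s=\pm1$ (commutation with $\rho_s$ plus invariance of the canonical form), the signs propagate along edges of the connected graph so that $\rho(z)=\pm\id_V$, and $\rho(z)=-\id_V$ forces $|\Phi^+|=\ell_S(z)<\infty$ and hence finiteness of $W$, after which $z=w_0$ by simple transitivity on chambers --- is exactly the classical proof, and every step is sound; faithfulness of $\rho$ disposes of the case $\epsilon=+1$, and part (1) follows since affine and generic groups are infinite. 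Two small streamlinings are available: the sign propagation needs no detour through the reflection $sts$, since $\langle\rho(z)\alpha_s\mid\rho(z)\alpha_t\rangle=\langle\alpha_s\mid\alpha_t\rangle$ gives $\epsilon_s\epsilon_t=1$ directly whenever $m_{s,t}\ge3$; and the identification $z=w_0$ can be read off from Proposition \ref{prop2_2} (an element sending every positive root to a negative one satisfies condition (b), hence equals $w_0$), avoiding the appeal to the chamber geometry. Your alternative semidirect-product argument for the affine case is also correct, using that the irreducible nontrivial action of $W_0$ on $\Z^{n-1}$ has no nonzero fixed vectors.
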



\section{Narrow normal subgroups of a Coxeter group}\label{sec3}

In this chapter $\Gamma$ denotes a Coxeter graph, $M=(m_{s,t})_{s,t\in S}$ denotes its Coxeter matrix and $W=W[\Gamma]$ denotes its Coxeter group.
Our aim is to determine the finite normal subgroups and the narrow normal subgroups of $W$.

We know by Qi \cite{Qi1} that the intersection of any family of parabolic subgroups of $W$ is still a parabolic subgroup.
Hence we can define the \emph{parabolic closure} of a subset $A\subset W$, denoted $\Pc(A)$, as the intersection of all parabolic subgroups containing $A$.
Note that, if $H$ is a normal subgroup of $W$, then $\Pc(H)$ is also a normal subgroup.
This observation combined with the following lemma are the key points of the present chapter. 

\begin{lem}\label{lem3_1}
Suppose $\Gamma$ is connected.
Let $P$ be a parabolic subgroup of $W$.
If $P$ is normal, then either $P=\{1\}$ or $P=W$.
\end{lem}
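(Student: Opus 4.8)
The plan is to write $P = w W_X w^{-1}$ for some $X \subseteq S$ and some $w \in W$, and then exploit normality to pin down $X$. Since $P$ is normal, for every $u \in W$ we have $u P u^{-1} = P$, so in particular $P$ is conjugate to $W_X$ via many elements; by the standard theory of parabolic subgroups (and Qi's work that the paper already cites), a parabolic subgroup determines its ``type'' $X$ up to the action of $W$, so normality forces $w W_X w^{-1} = W_X$ for all the relevant conjugations — more usefully, I would first reduce to the case $P = W_X$ is a \emph{standard} parabolic subgroup, since conjugating a normal subgroup by $w^{-1}$ changes nothing. So assume $P = W_X$ with $X \subseteq S$, and suppose $X \neq \emptyset$ and $X \neq S$; the goal is to derive a contradiction using connectedness of $\Gamma$.

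The key step is this: because $\Gamma$ is connected and $X$ is a proper nonempty subset, there exist $s \in X$ and $t \in S \setminus X$ with $m_{s,t} \neq 2$, i.e. $s$ and $t$ do not commute. Now $s \in W_X = P$, and normality gives $t s t^{-1} = tst \in P = W_X$. I would then compute the support of $tst$: since $m_{s,t} \geq 3$, the element $tst$ has reduced expression $tst$ (it has length $3$ — one checks $ts \neq st$ and $tst \neq s$), so $\supp(tst) = \{s,t\}$ by the support result quoted from Bourbaki/Davis in the preliminaries. But any element of $W_X$ has support contained in $X$ (again by the support characterization, since $W_X = W[\Gamma_X]$ sits inside $W$ on the generators $X$). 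Hence $t \in \supp(tst) \subseteq X$, contradicting $t \in S \setminus X$. Therefore $X = \emptyset$ (giving $P = \{1\}$) or $X = S$ (giving $P = W$).

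The main obstacle is the reduction to the standard case and making sure the support argument is airtight: I need to know that if $g \in W_X$ then $\supp(g) \subseteq X$, which is immediate from the fact that $W[\Gamma_X] \to W_X$ is an isomorphism and reduced words in $W_X$ use only letters of $X$ — this is exactly the content cited from Bourbaki and \cite[Prop.~4.1.1]{Davis1}. I would also double-check the length computation $\ell_S(tst) = 3$: if $tst$ had shorter length it would equal $t$, $s$, $st$, $ts$, or $1$; equality with a length-$1$ or length-$0$ element is impossible since $t \notin \langle s \rangle$, and $tst = st$ or $tst = ts$ would force $ts = st$, i.e. $m_{s,t} = 2$, contrary to assumption. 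So $tst$ is reduced and the support argument applies. For the reduction: if $P = wW_Xw^{-1}$ is normal then so is $W_X = w^{-1}Pw = w^{-1}(wW_Xw^{-1})w$ — wait, that is trivially $W_X$; the point is simply that $P$ normal $\iff$ $P = uPu^{-1}$ for all $u$, and a parabolic subgroup is by definition conjugate to some standard $W_X$, so we may as well name the conjugate $w$ and absorb it, since the statement ``$P = \{1\}$ or $P = W$'' is conjugation-invariant; thus it suffices to treat $P = W_X$ standard, which is what the argument above does.
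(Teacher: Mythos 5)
Your proof is correct and follows essentially the same route as the paper: reduce to a standard parabolic $P=W_X$, use connectedness of $\Gamma$ to find non-commuting $s\in X$ and $t\in S\setminus X$, and derive a contradiction from $tst\in W_X$. The only difference is cosmetic --- the paper extracts the contradiction via the $(X,X)$-minimality of $t$ and Proposition \ref{prop2_1}, whereas you compute $\ell_S(tst)=3$ directly and invoke the support characterization; both are sound (your justification for excluding $tst=st$ and $tst=ts$ is slightly off, since these equalities force $t=1$ rather than $st=ts$, but those cases are impossible either way, e.g.\ by parity of length).
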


\begin{proof}
Let $P$ be a normal parabolic subgroup of $W$.
We assume that $P\neq\{1\}$ and $P\neq W$, and we show that there is a contradiction.
We can assume without loss of generality that $P=W_X$ for some $X\subset S$ with $X\neq\emptyset$ and $X\neq S$.
Since $\Gamma$ is connected, we can find $s\in X$ and $t\in S\setminus X$ such that $m_{s,t}\neq 2$.
It is clear that $t$ is $(X,X)$-minimal.
Moreover, since $W_X$ is normal, there is $v\in W_X$ such that $tst=v$.
By Proposition \ref{prop2_1}, 
\[
\ell_S(v)+1=\ell_S(tv)=\ell_S(st)=2\,,
\]
hence $\ell_S(v)=1$, that is, there exists $s'\in X$ such that $v=s'$.
As $st=ts'$, we have $\supp(ts) =\{t,s\}=\{t,s'\}$, hence $s=s'$, and therefore $st=ts$.
This last equality contradicts the hypothesis $m_{s,t}\neq 2$.
\end{proof}

We start our study with the case where $\Gamma$ is connected, that is, where $W$ is irreducible.
If $W$ is of spherical type, then $W$ is finite and all normal subgroups of $W$ are narrow and even finite.
Now we consider the case where $W$ is of affine type.  

\begin{prop}(see \cite[Cor. 0.3]{Maxwell1998})\label{prop3_2}
Suppose that $W$ is an irreducible Coxeter group of affine type.
Any non-trivial normal subgroup of $W$ is virtually $\Z^{n-1}$, where $n=|S|$.
In particular such a subgroup is narrow, but it is not finite.
\end{prop}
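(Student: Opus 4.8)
The plan is to exploit the semidirect product decomposition $W=\Z^{n-1}\rtimes W_0$ recalled in Section \ref{sec2}. Write $T$ for the normal translation subgroup $\Z^{n-1}$, so that $W/T\cong W_0$ and the conjugation action of $W$ on $T$ factors through $W_0$, where it coincides with the linear action of the finite reflection group $W_0$ on the lattice $T$. Since $\Gamma$ is connected, this action is irreducible after extension to $V_0:=T\otimes_\Z\R\cong\R^{n-1}$, a classical fact about affine Weyl groups (see Bourbaki \cite{Bourb1} or Humphreys \cite{Humph1}); moreover $W_0$ acts faithfully on $T$, since $T$ spans $V_0$ and $W_0$ acts faithfully on $V_0$. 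I will use irreducibility only through the following consequence: every nonzero $W_0$-invariant subgroup $N$ of $T$ has finite index in $T$ (indeed $N\otimes_\Z\R$ is then a nonzero $W_0$-submodule of $V_0$, hence equals $V_0$, so $N$ has rank $n-1$, and a subgroup of rank $n-1$ in $T\cong\Z^{n-1}$ has finite index).

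Let $H$ be a non-trivial normal subgroup of $W$. The first step is to show $H\cap T\neq\{1\}$. As $H$ and $T$ are both normal in $W$, the commutator subgroup $[H,T]$ lies in $H\cap T$. If $H\cap T=\{1\}$, this forces $[H,T]=\{1\}$, so $H$ centralises $T$; but $C_W(T)=T$, because an element of $W$ centralising $T$ has trivial image in $W_0$ (using that $W_0$ acts faithfully on $T$) and hence lies in $T$. Thus $H\subseteq T$, whence $H=H\cap T=\{1\}$, contradicting $H\neq\{1\}$. Therefore $H\cap T\neq\{1\}$.

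The remaining step is immediate from the preparation. The subgroup $H\cap T$ is normal in $W$ and contained in $T$, hence it is a nonzero $W_0$-invariant subgroup of $T$; by the consequence of irreducibility noted above it has finite index in $T$, and it is free abelian of rank $n-1$. Since $[W:T]=|W_0|<\infty$, the subgroup $H\cap T$ has finite index in $W$, a fortiori in $H$. Hence $H$ is virtually $\Z^{n-1}$; being virtually abelian it is narrow, and as $n=|S|\ge2$ it contains the non-trivial group $\Z^{n-1}$ and is therefore infinite, in particular not finite.

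The only step needing genuine input rather than formal group-theoretic manipulation is the irreducibility of the $W_0$-action on $\R^{n-1}$ — equivalently, that a nonzero subgroup of the translation lattice which is normal in $W$ already has finite index — and this is precisely where connectedness of $\Gamma$ is used; everything else is bookkeeping with the semidirect product and with finite-rank abelian groups. I therefore expect no serious obstacle: this yields a short self-contained proof of the statement, which is otherwise \cite[Corollary 0.3]{Maxwell1998}.
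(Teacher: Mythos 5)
Your proof is correct, and it shares the paper's overall framework (the decomposition $W=\Z^{n-1}\rtimes W_0$ with $W_0$ acting irreducibly on the translation lattice $T$, and the observation that a non-trivial $W$-invariant subgroup of $T$ must be a full-rank lattice), but it reaches the crucial intermediate step $H\cap T\neq\{1\}$ by a genuinely different and more elementary route. The paper first shows that $H$ is infinite: since $\Pc(H)$ is a normal parabolic subgroup, Lemma \ref{lem3_1} gives $\Pc(H)=W$, and Bourbaki's result that the parabolic closure of a finite subgroup is finite rules out $H$ finite; then $H\cap T\neq\{1\}$ because $T$ has finite index in $W$. You instead use the commutator trick $[H,T]\subseteq H\cap T$ together with $C_W(T)=T$ (which rests on the faithfulness of the $W_0$-action on $T$), so that $H\cap T=\{1\}$ would force $H\subseteq T$ and hence $H=\{1\}$. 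Your argument is self-contained within the semidirect-product picture and avoids both Qi's theorem on intersections of parabolic subgroups and the Bourbaki exercise on parabolic closures of finite subgroups; the paper's route, on the other hand, sets up the parabolic-closure machinery that it reuses immediately afterwards in the generic case (Proposition \ref{prop3_3}), so it buys uniformity across the two cases. All the facts you invoke (faithfulness and $\R$-irreducibility of the $W_0$-action on $T\otimes_\Z\R$, and $|S|\ge 2$ for affine type) are standard and correctly applied, so there is no gap.
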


\begin{proof}
Recall that $W$ can be decomposed as a semi-direct product $W=\Z^{n-1}\rtimes W_0$, where $n=|S|$, $W_0$ is a Weyl group and the action of $W_0$ on $\Z^{n-1}$ is irreducible.
Here by irreducible we mean that the linear representation $\rho:W_0\to\GL_{n-1}(\Z)\subset\GL_{n-1}(\C)$ associated with the semi-direct product decomposition is irreducible.
Let $H$ be a non-trivial normal subgroup of $W$.
Since $\Pc(H)$ is also a normal subgroup we have $\Pc(H)=W$ by Lemma \ref{lem3_1}.
If $H$ were finite, then, by Bourbaki \cite[Ch. V, \S 4, Exercise 2\,(d)]{Bourb1} (see also Tits \cite{Tits2}), $\Pc(H)$ would be finite, which is not possible because $\Pc(H)=W$ is infinite.
So, $H$ is infinite, hence $H\cap\Z^{n-1}\neq\{0\}$.
Since the representation $\rho$ is irreducible, we deduce that $H\cap\Z^{n-1}$ is a rank $n-1$ lattice, hence $H$ is virtually $\Z^{n-1}$.
\end{proof}

In the case where $W$ is a generic irreducible Coxeter group we have the following.

\begin{prop}\label{prop3_3}
Let $W$ be a generic irreducible Coxeter group.
Then $W$ does not contain any narrow normal subgroup other than the trivial group.
\end{prop}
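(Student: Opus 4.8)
The plan is to prove the contrapositive in its strongest form, namely that \emph{every} non-trivial normal subgroup $H$ of $W$ is full-sized. First I reduce to the case of full parabolic closure: since $H$ is normal, $\Pc(H)$ is a normal parabolic subgroup of $W$ containing the non-trivial group $H$, so by Lemma \ref{lem3_1} (using that $\Gamma$ is connected) we get $\Pc(H)=W$. Next I record that $H$ is infinite: if it were finite then, as in the proof of Proposition \ref{prop3_2} (Bourbaki \cite[Ch. V, \S 4, Exercise 2\,(d)]{Bourb1}), $H$ would lie in a finite parabolic subgroup, hence $\Pc(H)$ would be finite, contradicting $\Pc(H)=W$ together with the fact that a generic Coxeter group is infinite.

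The main step is then to pass from ``$H$ infinite and normal'' to ``$H$ contains a copy of $F_2$''. Here I would use that a generic irreducible Coxeter group $W$ is acylindrically hyperbolic: $W$ acts properly and cocompactly on its CAT(0) Davis complex, and because $W$ is irreducible (so the action does not split as a non-trivial product) and non-affine (so it does not fix a point at infinity), this action admits a rank-one isometry. One then invokes the standard dichotomy for acylindrically hyperbolic groups (Dahmani--Guirardel--Osin, Osin): a normal subgroup is either contained in the unique maximal \emph{finite} normal subgroup or contains a non-abelian free group. Since $H$ is infinite it is not contained in a finite subgroup, so $H$ is full-sized, and the proposition follows.

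The step I expect to be the real obstacle is establishing the acylindrical hyperbolicity used above; one must either cite it or derive it from rank rigidity for the Davis complex along the lines sketched. An alternative that stays inside Coxeter-group theory is to use the structural side of Noskov--Vinberg \cite{NosVin1}: a narrow subgroup of $W$ is virtually abelian, and their analysis in fact confines the parabolic closure of such a subgroup to a parabolic subgroup with no generic irreducible component, which is incompatible with $\Pc(H)=W$ being irreducible and generic. Either way, everything beyond Lemma \ref{lem3_1} reduces to this single ``largeness'' statement for generic irreducible Coxeter groups, the passage through parabolic closures being routine bookkeeping.
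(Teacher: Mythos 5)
Your main route is correct in outline but genuinely different from the paper's, and it hinges on one heavy external input. Granting that a generic irreducible Coxeter group is acylindrically hyperbolic, your argument works and in fact proves the stronger statement that every non-trivial normal subgroup of $W$ is full-sized: the parabolic-closure step ($\Pc(H)=W$ by Lemma \ref{lem3_1}, hence $H$ infinite) is exactly as in the paper, and the normal-subgroup dichotomy for acylindrically hyperbolic groups (Dahmani--Guirardel--Osin, Osin) finishes. Be careful, however, with how you propose to justify acylindrical hyperbolicity: rank rigidity is not available for general CAT(0) spaces, so ``irreducible, hence no product splitting; non-affine, hence no fixed point at infinity, hence a rank-one isometry'' is not a proof for the Davis complex. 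You must cite the Coxeter-specific theorem of Caprace--Fujiwara producing rank-one elements for irreducible, non-spherical, non-affine Coxeter groups acting on the Davis complex, upgraded to acylindrical hyperbolicity via Bestvina--Fujiwara/Sisto/Osin (genericity also rules out $W$ being virtually cyclic, which is needed there). The paper instead stays inside Coxeter theory: after $\Pc(H)=W$ and $H$ infinite, Noskov--Vinberg \cite{NosVin1} gives that a narrow $H$ is virtually $\Z^m$, Krammer's Theorem 6.8.2 \cite{Kramm1} forces $m=1$ because $\Pc(H)=W$, the conjugation action of $W$ on $H\cong\Z$ together with Krammer's Corollary 6.3.10 ($H$ has finite index in its centralizer) forces $H$ to have finite index in $W$, and Qi's Lemma 5.2 \cite{Qi} then identifies $W$ with the infinite dihedral group, contradicting genericity. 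Your approach buys a stronger conclusion with less Coxeter-specific bookkeeping; the paper's buys a self-contained argument within the literature it already uses.

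Your fallback ``alternative that stays inside Coxeter-group theory'' has a genuine gap. Noskov--Vinberg prove the dichotomy (full-sized or virtually abelian), but they do not confine the parabolic closure of a narrow subgroup to a parabolic without generic components, and no such statement can hold: in the $(2,3,7)$ triangle group (irreducible and generic), a Coxeter element $c=s_1s_2s_3$ has infinite order, every proper parabolic is finite, so $\Pc(\langle c\rangle)$ is the whole group, yet $\langle c\rangle\cong\Z$ is narrow. Thus ``$H$ narrow with $\Pc(H)=W$ generic'' is not by itself contradictory; normality must be used a second time after the parabolic-closure step, which is precisely what the paper does (via Krammer's centralizer result and Qi's lemma) and what your acylindrical-hyperbolicity route does (via the dichotomy for normal subgroups). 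So the passage beyond Lemma \ref{lem3_1} is not routine bookkeeping unless normality stays in play.
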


\begin{proof}
We assume that $W$ contains a non-trivial narrow normal subgroup, $H$, and we show that there is a contradiction.
As in the proof of Proposition \ref{prop3_2} we have $\Pc(H)=W$.
In particular $H$ cannot be finite otherwise, by Bourbaki \cite[Ch. V, \S 4, Exercise 2\,(d)]{Bourb1}, $\Pc(H)=W$ would be finite.
Moreover, by Noskov--Vinberg \cite{NosVin1}, $H$ is virtually a finite rank abelian group, hence $H$ contains a finite index subgroup isomorphic to $\Z^m$ for some $m\ge1$.
So, without loss of generality we can assume that $H\simeq\Z^m$ for some $m\ge1$.
Since $\Pc(H)=W$, by \cite[Theorem 6.8.2]{Kramm1}, $m=1$. 
Since $H$ is normal, $W$ acts on $H$ by conjugation, thus we obtain a homomorphism from $W\to\Aut(H)$ whose kernel is the centralizer of $H$ in $W$. 
The automorphism group of $H\cong\Z$ is a cyclic group of order $2$, hence the centralizer of $H$ in $W$ has at most index $2$. 
On the other hand $H$ is of finite index in its centralizer by \cite[Corollary 6.3.10]{Kramm1}. 
This shows that $H$ is of finite index in $W$. 
Hence $W$ has a normal infinite cyclic subgroup of finite index. 
By \cite[Lemma 5.2]{Qi} we know that $W$ is isomorphic to the infinite dihedral group which is an affine Coxeter group, a contradiction.
\end{proof}

The case of an arbitrary Coxeter group follows from Propositions \ref{prop3_2} and \ref{prop3_3}.

\begin{thm}\label{thm3_4}
Let $W$ be a Coxeter group and let $H$ be a normal subgroup of $W$.
\begin{itemize}
\item[(1)]
The group $H$ is finite if and only if $H$ is a subgroup of $W_\sph$.
\item[(2)]
The group $H$ is narrow if and only if $H$ is a subgroup of $W_\aff\times W_\sph$.
\end{itemize}
\end{thm}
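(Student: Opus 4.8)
The plan is to deduce Theorem \ref{thm3_4} from the irreducible case by feeding each irreducible component of $W$ into Proposition \ref{prop3_2} or Proposition \ref{prop3_3}, according to whether it is affine or generic. Write $W=W_{X_1}\times\cdots\times W_{X_\ell}$ for the decomposition into irreducible components, ordered so that $W_{X_1},\dots,W_{X_p}$ are the generic ones, $W_{X_{p+1}},\dots,W_{X_q}$ the affine ones, and $W_{X_{q+1}},\dots,W_{X_\ell}$ the spherical ones, so that $W_\gen=W_{X_1}\times\cdots\times W_{X_p}$, $W_\aff=W_{X_{p+1}}\times\cdots\times W_{X_q}$, and $W_\sph=W_{X_{q+1}}\times\cdots\times W_{X_\ell}$.

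First I would isolate two elementary facts. Let $\pi_i\colon W\to W_{X_i}$ be the canonical projection. If $H$ is normal in $W$, then $\pi_i(H)$ is normal in $W_{X_i}$: conjugating $H$ by an arbitrary element of $W_{X_i}\subseteq W$ and applying $\pi_i$ shows that $\pi_i(H)$ is stable under conjugation by $W_{X_i}$. Secondly, the image of a narrow group under any homomorphism is narrow, since a surjection onto $F_2$ splits ($F_2$ being free) and so produces a copy of $F_2$ in the source; in particular $\pi_i(H)$ is narrow whenever $H$ is. I would also record that any subgroup of a finite group is finite and that a virtually abelian group is narrow (a copy of $F_2$ would meet the finite-index abelian subgroup in a subgroup of finite index in $F_2$, which is non-abelian by Nielsen--Schreier).

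With these in hand the argument is short. For the ``only if'' part of (1): if $H$ is finite, then for each $i\le q$ the group $\pi_i(H)$ is a finite normal subgroup of the non-spherical irreducible Coxeter group $W_{X_i}$, hence trivial --- by Proposition \ref{prop3_2} when $W_{X_i}$ is affine (its non-trivial normal subgroups are infinite) and by Proposition \ref{prop3_3} when $W_{X_i}$ is generic (a finite subgroup is narrow). Thus every element of $H$ has trivial coordinates in $W_{X_1},\dots,W_{X_q}$, i.e.\ $H\subseteq W_\sph$. For the ``only if'' part of (2): if $H$ is narrow, then for each generic index $i\le p$ the group $\pi_i(H)$ is a narrow normal subgroup of the generic irreducible group $W_{X_i}$, hence trivial by Proposition \ref{prop3_3}; so $H\subseteq W_\aff\times W_\sph$. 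The converses are immediate: $W_\sph$ is finite, and $W_\aff\times W_\sph$ is virtually $\Z^m$ (each affine $W_{X_i}$ is virtually $\Z^{n_i-1}$ and $W_\sph$ is finite), hence narrow, so any subgroup of it is narrow.

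I do not anticipate a genuine obstacle here, since Propositions \ref{prop3_2} and \ref{prop3_3} already carry the parabolic-closure arguments; the only points needing a little care are checking that ``$\pi_i(H)$ trivial for all non-spherical (resp.\ generic) $i$'' really does confine $H$ to $W_\sph$ (resp.\ $W_\aff\times W_\sph$), and the observation that narrowness passes to homomorphic images --- elementary, but it is precisely what legitimises the reduction to the irreducible components.
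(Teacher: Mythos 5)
Your proposal is correct and follows essentially the same route as the paper's proof: project $H$ onto the irreducible components, apply Proposition \ref{prop3_3} (resp.\ Propositions \ref{prop3_2} and \ref{prop3_3}) to kill the generic (resp.\ non-spherical) coordinates, and note the easy converses. The extra details you supply (normality and narrowness of the projected images, narrowness of virtually abelian groups) are exactly the steps the paper leaves implicit, and they are argued correctly.
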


\begin{proof}
Let $W_{X_1},\dots,W_{X_\ell}$ be the irreducible components of $W$.
We assume that $W_{X_1},\dots,W_{X_p}$ are the generic components of $W$, that $W_{X_{p+1}},\dots,W_{X_q}$ are the affine components of $W$, and that $W_{ X_{q+1}},\dots,W_{X_\ell}$ are the spherical components of $W$, where $0\le p\le q\le\ell$.
In particular, $W_\gen=W_{X_1}\times\cdots\times W_{X_p}$, $W_\aff=W_{X_{p+1}}\times\cdots\times W_{X_q}$ and $W_\sph=W_{X_{q+1}}\times\cdots\times W_{X_\ell}$.
Recall that 
\[
W=W_{X_1}\times\cdots\times W_{X_\ell}=W_\gen\times W_\aff\times W_\sph\,.
\]
For $i\in\{1,\dots,\ell\}$ we denote by $\pi_i:W\to W_{X_i}$ the projection on the $i$-th component.

Suppose $H$ is narrow.
Then, for $i\in\{1,\dots,p\}$, $\pi_i(H)$ is a narrow normal subgroup of $W_{X_i}$, so, by Proposition \ref{prop3_3}, $\pi_i(H)=\{1\}$.
This implies that $H$ is a subgroup of $W_\aff\times W_\sph$.
Conversely, it is easily seen that, if $H$ is a subgroup of $W_\aff\times W_\sph$, then $H$ is narrow.

Suppose $H$ is finite.
Then, for $i\in\{1,\dots,p,p+1,\dots,q\}$, $\pi_i(H)$ is a finite normal subgroup of $W_{X_i}$, so, by Propositions \ref{prop3_2} and \ref{prop3_3}, $\pi_i(H)=\{1\}$.
This implies that $H$ is a subgroup of $W_\sph$.
Conversely, it is easily seen that, if $H$ is a subgroup of $W_\sph$, then $H$ is finite.
\end{proof}

\begin{corl}\label{corl3_5}
Let $W$ be a Coxeter group.
\begin{itemize}
\item[(1)]
If $W_\sph=\{1\}$, then $W$ does not contain any non-trivial finite normal subgroup.
\item[(2)]
If $W_\sph=W_\aff=\{1\}$, then $W$ does not contain any non-trivial narrow normal subgroup.
\end{itemize}
\end{corl}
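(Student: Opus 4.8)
The plan is to deduce both statements directly from Theorem \ref{thm3_4}, since the corollary is just the specialization of that theorem to the case when one or two of the three factors of $W=W_\gen\times W_\aff\times W_\sph$ are trivial. No new ideas are needed beyond invoking the equivalences already established.

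First, for part (1), let $H$ be a finite normal subgroup of $W$. By Theorem \ref{thm3_4}(1), $H$ is finite if and only if $H\le W_\sph$. Under the hypothesis $W_\sph=\{1\}$ this forces $H\le\{1\}$, i.e. $H=\{1\}$, so $W$ has no non-trivial finite normal subgroup. Next, for part (2), let $H$ be a narrow normal subgroup of $W$. By Theorem \ref{thm3_4}(2), $H$ is narrow if and only if $H\le W_\aff\times W_\sph$. Under the hypotheses $W_\sph=W_\aff=\{1\}$ we get $W_\aff\times W_\sph=\{1\}$, hence $H=\{1\}$, and $W$ has no non-trivial narrow normal subgroup.

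Since Theorem \ref{thm3_4} is already proved, there is essentially no obstacle here: the only thing worth noting is that the logical direction we use is the ``only if'' direction of each equivalence (a finite, resp. narrow, normal subgroup is contained in $W_\sph$, resp. $W_\aff\times W_\sph$), and that these containments collapse to triviality once the relevant factors vanish. The argument is a one-line substitution for each of the two parts.

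\begin{proof}
Both assertions are immediate consequences of Theorem \ref{thm3_4}. If $W_\sph=\{1\}$ and $H$ is a finite normal subgroup of $W$, then $H\le W_\sph=\{1\}$ by Theorem \ref{thm3_4}(1), so $H=\{1\}$; this proves (1). If $W_\sph=W_\aff=\{1\}$ and $H$ is a narrow normal subgroup of $W$, then $H\le W_\aff\times W_\sph=\{1\}$ by Theorem \ref{thm3_4}(2), so $H=\{1\}$; this proves (2).
\end{proof}
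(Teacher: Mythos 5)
Your proof is correct and is exactly how the paper obtains this corollary: it is stated there without further argument as an immediate consequence of Theorem \ref{thm3_4}, and your one-line specialization (using the ``only if'' containments $H\le W_\sph$, resp. $H\le W_\aff\times W_\sph$, which collapse to $\{1\}$ under the hypotheses) is precisely that intended deduction.
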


At the end of this chapter we want to point out that the property of a group not having non-trivial finite normal subgroups is inherited to finite index normal subgroups.

\begin{prop}
	Let $G$ be a group and $N\subseteq G$ be a finite index normal subgroup. If $G$ does not have non-trivial finite normal subgroups, then also $N$ does not have non-trivial finite normal subgroups.  
\end{prop}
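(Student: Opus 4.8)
The plan is to show that any finite normal subgroup $F$ of $N$ is contained in a finite normal subgroup of $G$; since $G$ has no non-trivial finite normal subgroup, this forces $F=\{1\}$.

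So let $F$ be a finite normal subgroup of $N$ and consider its conjugates in $G$. For $g\in G$, since $N\trianglelefteq G$ we have $gNg^{-1}=N$, so $gFg^{-1}$ is a subgroup of $N$, and it is again normal in $N$ because conjugation by $g$ is an automorphism of $N$ carrying the normal subgroup $F$ to $gFg^{-1}$. Moreover, if $g'=gn$ with $n\in N$, then $g'Fg'^{-1}=g\,(nFn^{-1})\,g^{-1}=gFg^{-1}$ since $F\trianglelefteq N$; hence the conjugate $gFg^{-1}$ depends only on the coset $gN$. As $[G:N]<\infty$, there are only finitely many distinct conjugates, say $F_1,\dots,F_k$ with $k\le[G:N]$, each of them a finite normal subgroup of $N$.

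Next I would form $K=F_1F_2\cdots F_k$. Because each $F_i$ is normal in $N$, this product is a subgroup of $N$, it is normal in $N$, and it is finite, since $|F_1\cdots F_i|$ divides $|F_1\cdots F_{i-1}|\cdot|F_i|$ by induction. Conjugation by any $g\in G$ permutes the set $\{F_1,\dots,F_k\}$ (the $G$-orbit of $F$), hence stabilises $K$ setwise; therefore $K$ is a finite normal subgroup of $G$. By hypothesis $K=\{1\}$, and since $F=F_1\subseteq K$ we conclude $F=\{1\}$, as desired.

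I do not expect any serious obstacle here: the only point requiring a little care is the observation that $gFg^{-1}$ depends only on $gN$ (so that finiteness of the index bounds the number of conjugates), together with the elementary fact that a finite product of finite subgroups that are all normal in a common group is finite; both are routine.
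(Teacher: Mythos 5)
Your proof is correct, and it takes a somewhat different route from the paper. The paper argues element by element: for $f\in F$ it notes that the $G$-conjugates of $f$ form a finite set (finitely many $N$-cosets, and $F$ is finite and normal in $N$), and then invokes Dietzmann's Lemma, quoted from Robinson, to conclude that these conjugates generate a finite normal subgroup of $G$, forcing $f=1$. You instead work at the level of the subgroup $F$ itself: the conjugate $gFg^{-1}$ depends only on the coset $gN$, so $F$ has at most $[G:N]$ conjugates $F_1,\dots,F_k$, each normal in $N$, and their product $K=F_1\cdots F_k$ is the normal closure of $F$ in $G$; it is finite, it coincides with $\langle F_1,\dots,F_k\rangle$ (which is why conjugation, which merely permutes the $F_i$, fixes it even though it may reorder the factors), and hence it is a finite normal subgroup of $G$, so trivial. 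In effect you have reproved the relevant special case of Dietzmann's Lemma directly, which makes your argument self-contained and avoids the citation, at the modest cost of checking the routine facts that a product of finitely many finite subgroups normal in $N$ is a finite subgroup independent of the ordering; the paper's version outsources exactly this bookkeeping to the quoted lemma. Both proofs are complete and essentially of the same difficulty.
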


\begin{proof} 
First we recall a special case of Dietzmann's Lemma (see \cite[\S 2.1 Cor. 2]{Robinson}): Let $G$ be a group and $x\in G$ be an element with finite order. If the set $X:=\left\{gxg^{-1}|g\in G\right\}$ is finite, then the subgroup generated by $X$ is a finite normal subgroup in $G$.

Let $F$ be a finite normal subgroup in $N$. For $f\in F$, the set $\left\{nfn^{-1}|n\in N\right\}$ is finite since $F$ is normal in $N$. The subgroup $N$ is normal in $G$ and has finite index, thus the set $\left\{gfg^{-1}|g\in G\right\}$ is finite.	By Dietzmann's Lemma we know that the subgroup generated by $\left\{gfg^{-1}\mid g\in G\right\}$ is a normal finite subgroup in $G$. By assumption, $G$ does not have non-trivial normal subgroups, hence $f=1$ and therefore $F$ is trivial.  
\end{proof}

Our last observation is the following.
\begin{prop}
Let $G$ be a group and $N\subseteq G$ be a normal subgroup. If $G$ does not have non-trivial narrow normal subgroups, then $Z(N)$ is trivial. 	
\end{prop}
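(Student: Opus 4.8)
The plan is to show that $Z(N)$ is itself a narrow normal subgroup of $G$, so that the hypothesis forces it to be trivial.

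First I would observe that $Z(N)$ is a characteristic subgroup of $N$: every automorphism of $N$ carries $Z(N)$ onto $Z(N)$. Since $N$ is normal in $G$, conjugation by any $g\in G$ restricts to an automorphism of $N$, and hence preserves $Z(N)$ setwise. Therefore $Z(N)$ is a normal subgroup of $G$ (this is the only point where any care is needed — the passage from ``characteristic in $N$'' to ``normal in $G$'' uses precisely the normality of $N$).

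Next I would note that $Z(N)$ is abelian by definition, and no abelian group can contain a copy of $F_2$, since every subgroup of an abelian group is abelian while $F_2$ is non-abelian. Hence $Z(N)$ is narrow. Combining the two observations, $Z(N)$ is a narrow normal subgroup of $G$; by the assumption that $G$ has no non-trivial narrow normal subgroup, we conclude $Z(N)=\{1\}$. There is no real obstacle in this argument beyond correctly invoking the normality of $N$ in the first step.
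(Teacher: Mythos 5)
Your proof is correct and follows essentially the same route as the paper: both arguments reduce to showing that $Z(N)$ is an abelian (hence narrow) normal subgroup of $G$, which the hypothesis forces to be trivial. The only cosmetic difference is how normality of $Z(N)$ in $G$ is checked --- you use the standard ``characteristic in a normal subgroup'' argument, while the paper writes $Z(N)=N\cap C_G(N)$ as an intersection of two normal subgroups of $G$; both verifications are equally valid.
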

\begin{proof}
Since $N$ is normal in $G$, the group $G$ acts on $N$ via conjugation and we obtain a group homorphism $G\rightarrow \Aut(N)$ whose kernel is the centralizer of $N$ in $G$ which we denote by $C_G(N):=\left\{g\in G| gn=ng\text{ for all }n\in N\right\}$. The intersection of normal subgroups is always normal, thus $N\cap C_G(N)$ is normal in $G$. It is easy to verify that $N\cap C_G(N)=Z(N)$, hence $Z(N)$ is an abelian normal subgroup in $G$, but the only abelian normal subgroup in $G$ is the trivial one, thus $Z(N)=\left\{1\right\}$.  	
\end{proof}


\section{Narrow normal subgroups of the automorphism group of a Coxeter group}\label{sec4}

Our aim now is to determine the finite normal subgroups and the narrow normal subgroups of the automorphism  group of a Coxeter group.
As in the previous chapter, we start with the case where $W$ is irreducible.
The case where $W$ is finite is trivial, since in this case $\Aut(W)$ is finite, hence any (normal) subgroup of $\Aut(W)$ is finite.

Let $\Gamma$ be a Coxeter graph and let $W=W[\Gamma]$ be its Coxeter group.
The conjugation by an element $w\in W$ will be denoted $\gamma_w:W\to W$, $u\mapsto wuw^{-1}$.
We have a homomorphism $\gamma:W\to\Aut(W)$, which sends $w$ to $\gamma_w$ for all $w\in W$, whose image, denoted $\Inn(W)$, is formed by the \emph{inner automorphisms}.
The kernel of $\gamma$ is the center of $W$, denoted $Z(W)$.
By Proposition \ref{prop2_3}, if $W$ is irreducible of affine type or generic, then $Z(W)=\{1\}$, hence $\gamma$ is injective.
On the other hand, the automorphism group of $\Gamma$, denoted $\Aut(\Gamma)$, can and will be considered as a subgroup of $\Aut(W)$.

The case where $W$ is an irreducible Coxeter group of affine type follows from the following result combined with Proposition \ref{prop3_2}.

\begin{thm}[Franzsen--Howlett \cite{FraHow1}]\label{thm4_1}
Let $\Gamma$ be a connected Coxeter graph of affine type and let $W=W[\Gamma]$ be its Coxeter group.
Then
\[
\Aut(W)=\Inn(W)\rtimes\Aut(\Gamma)\simeq W\rtimes\Aut(\Gamma)\,.
\]
\end{thm}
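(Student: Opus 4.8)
The plan is to prove the Franzsen--Howlett theorem (Theorem \ref{thm4_1}) by first establishing that every automorphism of $W$ is, up to composition with an inner automorphism, a graph automorphism, and then checking that the resulting product is semidirect with trivial intersection. The starting point is the observation that since $W$ is irreducible of affine type, by Proposition \ref{prop2_3}(1) we have $Z(W)=\{1\}$, so $\gamma:W\to\Aut(W)$ is injective and $\Inn(W)\simeq W$; this gives the second isomorphism in the statement once the first is proved. The key algebraic input is that $S$ is, up to conjugacy, a canonically determined generating set: the reflections in $W$ (conjugates of elements of $S$) form an intrinsically defined subset, and for affine $W$ one knows that the set of simple systems (i.e. the $W$-conjugates of $S$) can be described via the reflection/root system, so any automorphism must send reflections to reflections.

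First I would recall that an automorphism $\varphi\in\Aut(W)$ permutes the conjugacy classes of involutions, and more sharply, using the affine root system structure $W=\Z^{n-1}\rtimes W_0$, that $\varphi$ must carry the set of reflections to itself. The crucial step is to show $\varphi(S)$ is again a simple system, i.e. conjugate to $S$: for affine Coxeter groups this is where one uses the geometry of the Tits cone / the affine hyperplane arrangement, together with the fact (as in Franzsen--Howlett) that for irreducible affine $W$ every set of $n$ reflections generating $W$ with the correct Coxeter relations is $W$-conjugate to $S$ — there are no ``exotic'' generating reflection sets because the Coxeter diagram has no diagram automorphism-induced ambiguity beyond $\Aut(\Gamma)$ itself. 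Having shown $\varphi(S)=wSw^{-1}$ for some $w\in W$, we replace $\varphi$ by $\gamma_w^{-1}\circ\varphi$, which now stabilizes $S$ setwise and hence induces a permutation of $S$ preserving the Coxeter matrix, i.e. an element of $\Aut(\Gamma)\subseteq\Aut(W)$. This proves $\Aut(W)=\Inn(W)\cdot\Aut(\Gamma)$.

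To upgrade this to a semidirect product decomposition, I would check that $\Inn(W)$ is normal in $\Aut(W)$ (automatic, as it always is) and that $\Inn(W)\cap\Aut(\Gamma)=\{1\}$. The latter amounts to: a nontrivial graph automorphism cannot agree with conjugation by an element of $W$. Suppose $\sigma\in\Aut(\Gamma)$ equals $\gamma_w$; then $w$ normalizes $S$, so $w$ lies in the normalizer $N_W(W_S)=N_W(W)=W$, which says nothing, but more usefully $\gamma_w$ permutes $S$, and by a length argument (each $s\in S$ has $\ell_S(wsw^{-1})=1$, so as in Proposition \ref{prop2_2}-style reasoning $w$ is the longest element if one existed — but affine $W$ has no longest element by Proposition \ref{prop2_2}), forcing $w=1$ and $\sigma=\id$. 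Concretely: if $wsw^{-1}\in S$ for all $s\in S$ then taking a reduced word for $w$ and pushing it through shows $\ell_S(w)+1 = \ell_S(ws) \le \ell_S(wsw^{-1}) + \ell_S(w) = 1 + \ell_S(w)$ repeatedly, and a short induction (or directly the argument in the proof of Lemma \ref{lem3_1}) pins down $w=1$. Hence the product is direct in the semidirect sense: $\Aut(W)=\Inn(W)\rtimes\Aut(\Gamma)$.

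The main obstacle is the middle step — proving that $\varphi(S)$ is $W$-conjugate to $S$, equivalently that $W$ has no reflection-generating set giving the same Coxeter type other than the obvious conjugates. This is genuinely the content of Franzsen--Howlett and relies on a case-by-case or root-system-theoretic analysis of the finitely many irreducible affine diagrams; I would not reproduce it in detail but cite \cite{FraHow1} for this classification-type input, noting that the affine case is where the answer is cleanest (no extra ``diagram twists'' appear, unlike some spherical cases such as $F_4$ or $D_4$ where Franzsen--Howlett find larger automorphism groups). The remaining steps — injectivity of $\gamma$, normality of $\Inn(W)$, triviality of the intersection — are routine given the earlier propositions.
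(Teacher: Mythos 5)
The paper offers no proof of this statement: it is imported verbatim from Franzsen--Howlett \cite{FraHow1}, and your proposal rests on exactly the same citation for the one genuinely hard step (that every automorphism of an irreducible affine Coxeter group carries reflections to reflections and sends $S$ to a $W$-conjugate of $S$, hence is inner modulo $\Aut(\Gamma)$), so in substance you take the same route as the paper. The only caveat is that the part you do argue yourself, namely $\Inn(W)\cap\Aut(\Gamma)=\{\id\}$, is not actually established by your length computation (the equality $\ell_S(ws)=\ell_S(w)+1$ for all $s\in S$ is asserted rather than proved, and that is precisely what needs showing when $wSw^{-1}=S$), but since the semidirect-product structure is itself part of the cited result, this does not affect the overall assessment.
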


\begin{prop}\label{prop4_2}
Let $\Gamma$ be a connected Coxeter graph of affine type and let $W=W[\Gamma]$ be its Coxeter group.
Any non-trivial normal subgroup of $\Aut(W)$ is virtually $\Z^{n-1}$, where $n=|S|$.
In particular such a subgroup is narrow and infinite.
\end{prop}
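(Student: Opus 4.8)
The plan is to combine the structural description from Theorem~\ref{thm4_1} with Proposition~\ref{prop3_2} and the triviality of $Z(W)$. First I would record that $\Aut(W)$ is itself virtually $\Z^{n-1}$: by Theorem~\ref{thm4_1} the subgroup $\Inn(W)$ has finite index in $\Aut(W)$ (the index is $|\Aut(\Gamma)|$, which is finite because $\Gamma$ is a finite graph), and $\Inn(W)\simeq W$ since $Z(W)=\{1\}$ by Proposition~\ref{prop2_3}, while $W$ is virtually $\Z^{n-1}$ via the decomposition $W=\Z^{n-1}\rtimes W_0$ with $W_0$ finite. Consequently every subgroup of $\Aut(W)$ is virtually $\Z^{m}$ for some $0\le m\le n-1$ (intersect with a finite-index copy of $\Z^{n-1}$), hence is narrow; and it is infinite as soon as it contains a copy of $\Z^{n-1}$, since $n-1\ge1$ as $W$ is infinite.

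Now let $H$ be a non-trivial normal subgroup of $\Aut(W)$. By the previous paragraph $H$ is virtually $\Z^m$ for some $m\le n-1$, so it suffices to show that $H$ contains a subgroup isomorphic to $\Z^{n-1}$: this forces $m=n-1$, and then $H$ is virtually $\Z^{n-1}$ and (being infinite and virtually abelian) narrow and infinite. To produce such a subgroup I would examine $H\cap\Inn(W)$. Being the intersection of two normal subgroups of $\Aut(W)$, it is normal in $\Aut(W)$, hence in particular normal in $\Inn(W)\simeq W$, so it corresponds to a normal subgroup of $W$. If $H\cap\Inn(W)\neq\{1\}$, then Proposition~\ref{prop3_2} tells us this subgroup is virtually $\Z^{n-1}$, so it, and therefore $H$, contains a copy of $\Z^{n-1}$, and we are done.

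It remains to rule out $H\cap\Inn(W)=\{1\}$. Here I would use that $\Inn(W)$ is normal in $\Aut(W)$ together with the standard inclusion $[H,\Inn(W)]\subseteq H\cap\Inn(W)$: the assumption gives $[H,\Inn(W)]=\{1\}$, i.e. $H\subseteq C_{\Aut(W)}(\Inn(W))$. But this centralizer is trivial: if $\alpha\in\Aut(W)$ commutes with every $\gamma_w$, then $\gamma_{\alpha(w)}=\alpha\gamma_w\alpha^{-1}=\gamma_w$ for all $w\in W$, whence $\alpha(w)w^{-1}\in\Ker(\gamma)=Z(W)=\{1\}$ by Proposition~\ref{prop2_3}, so $\alpha=\id$. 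Thus $H=\{1\}$, contradicting the choice of $H$; so this case does not occur.

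I do not expect a serious obstacle; the only point needing a little care is the elementary bookkeeping about virtually abelian groups — that a subgroup of a virtually $\Z^{n-1}$ group is virtually $\Z^{m}$ with $m\le n-1$, and that such a group containing a copy of $\Z^{n-1}$ must in fact be virtually $\Z^{n-1}$ — since this is exactly what lets us upgrade ``$H$ contains $\Z^{n-1}$'' to the statement of the proposition.
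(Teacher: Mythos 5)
Your proof is correct, and its skeleton matches the paper's: intersect $H$ with $\Inn(W)\simeq W$ (using $Z(W)=\{1\}$), apply Proposition~\ref{prop3_2} to the non-trivial case, and show that $H\cap\Inn(W)=\{\id\}$ forces $H=\{\id\}$. Where you genuinely diverge is in that last step: the paper invokes the semidirect product of Theorem~\ref{thm4_1}, writes $f=\gamma_u g$ with $g\in\Aut(\Gamma)$, and checks generator by generator that $\gamma_{u\,g(s)\,u^{-1}s^{-1}}\in H\cap\Inn(W)$ forces $g=\gamma_{u^{-1}}$ and $f=\id$; you instead use the general fact that two normal subgroups with trivial intersection commute, so $H\subseteq C_{\Aut(W)}(\Inn(W))$, and then observe that this centralizer is trivial because $\gamma_{\alpha(w)}=\alpha\gamma_w\alpha^{-1}=\gamma_w$ gives $\alpha(w)w^{-1}\in Z(W)=\{1\}$. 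Your version is cleaner in that it needs only $Z(W)=\{1\}$ for this step (Theorem~\ref{thm4_1} is then used only to see that $\Inn(W)$ has finite index in $\Aut(W)$), and it is in fact the same trick the paper itself uses in the proof of Proposition~\ref{prop4_3}. Your final bookkeeping is slightly more roundabout than necessary: since $H\cap\Inn(W)$ already has finite index in $H$ (as $\Aut(\Gamma)$ is finite), you could conclude directly that $H$ is virtually $\Z^{n-1}$ without passing through ``$H$ is virtually $\Z^m$ with $m\le n-1$ and contains a copy of $\Z^{n-1}$''; but that detour is correctly argued and harmless.
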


\begin{proof}
Let $H$ be a normal subgroup of $\Aut(W)$.
We claim that, if $H\cap\Inn(W)=\{\id\}$, then $H=\{\id\}$.
Suppose $H\cap\Inn(W)=\{\id\}$.
Let $f\in H$.
By Theorem \ref{thm4_1}, $f$ can be written in the form $f=\gamma_ug$ with $u\in W$ and $g\in\Aut(\Gamma)$.
Let $s\in S$.
Since $H$ is normal, we have $\gamma_sf^{-1}\gamma_s^{-1}\in H$, hence $f(\gamma_sf^{-1}\gamma_s^{-1})\in H$, and therefore 
\[
f(\gamma_sf^{-1}\gamma_s^{-1})=\gamma_ug\gamma_sg^{-1}\gamma_u^{-1}\gamma_s^{-1}=\gamma_u\gamma_{g(s)}
\gamma_u^{-1}\gamma_s^{-1}=\gamma_{u\,g(s)\,u^{-1}s^{-1}}\in H\cap\Inn(W)=\{\id\}\,.
\]
It follows that $u\,g(s)\,u^{-1}s^{-1}=1$, hence $g(s)=u^{-1}su$.
This equality is true for all $s\in S$, hence $g=\gamma_{u^{-1}}$, and therefore $f=\gamma_u\gamma_{u^{-1}}=\id$.
So, $H=\{\id\}$.

Assume $H$ is a non-trivial normal subgroup of $\Aut(W)$.
Then, by the above, $H\cap\Inn(W)$ is a normal non-trivial subgroup of $\Inn(W)\simeq W$, hence, by Proposition \ref{prop3_2}, $H\cap\Inn(W)$ is virtually $\Z^{n-1}$.
Since $\Aut(\Gamma)$ is finite, $H\cap\Inn(W)$ is a finite index subgroup of $H$, hence $H$ is also virtually $\Z^{n-1}$.
\end{proof}

Concerning the case where $W$ is a generic irreducible Coxeter group we have the following result which also follows from M\"oller--Varghese \cite[Lemma 2.14]{MoellerVarghese}.

\begin{prop}\label{prop4_3}
Let $W$ be a generic irreducible Coxeter group.
Then $\Aut(W)$ contains no narrow normal subgroup other than the trivial group.
\end{prop}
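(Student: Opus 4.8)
The plan is to leverage the analogous statement for $W$ itself (Proposition \ref{prop3_3}) together with the structure of $\Aut(W)$ relative to $\Inn(W)$. Since $W$ is generic and irreducible, Proposition \ref{prop2_3} gives $Z(W)=\{1\}$, so $\gamma\colon W\to\Aut(W)$ identifies $W$ with $\Inn(W)$, which is a normal subgroup of $\Aut(W)$. Suppose $H$ is a non-trivial narrow normal subgroup of $\Aut(W)$. The first step is to examine $H\cap\Inn(W)$: this is a narrow normal subgroup of $\Inn(W)\simeq W$, and it is also normal in $\Aut(W)$. By Proposition \ref{prop3_3} a narrow normal subgroup of $W$ must be trivial, hence $H\cap\Inn(W)=\{\id\}$. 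So $H$ intersects $\Inn(W)$ trivially.

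The second step is to derive a contradiction from $H\cap\Inn(W)=\{\id\}$ together with normality of $H$, mimicking the argument in the proof of Proposition \ref{prop4_2}. Because $H$ and $\Inn(W)$ are both normal and intersect trivially, they commute elementwise: for $f\in H$ and $w\in W$, the commutator $f\gamma_w f^{-1}\gamma_w^{-1}$ lies in $H\cap\Inn(W)=\{\id\}$, so $f$ commutes with every inner automorphism. This means $f$ centralizes $\Inn(W)$ in $\Aut(W)$. The key point is then to show that the centralizer of $\Inn(W)$ in $\Aut(W)$ is trivial: if $f\in\Aut(W)$ satisfies $f\gamma_w=\gamma_w f$ for all $w\in W$, then for each $s\in S$ we get $\gamma_{f(s)}=f\gamma_s f^{-1}=\gamma_s$, so $f(s)s^{-1}\in Z(W)=\{1\}$, giving $f(s)=s$ for all $s\in S$, whence $f=\id$. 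Therefore $H=\{\id\}$, contradicting non-triviality.

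The main obstacle — really the only non-formal ingredient — is that both steps rely crucially on $Z(W)=\{1\}$, which holds here precisely because $W$ is generic and irreducible (Proposition \ref{prop2_3}(1)); this is what makes $\gamma$ injective and forces the centralizer computation to close. A secondary subtlety is verifying that $H\cap\Inn(W)$ is genuinely normal \emph{in $\Aut(W)$} (not just in $\Inn(W)$), which is immediate since it is an intersection of two normal subgroups, and that it is narrow, which is inherited by subgroups. Beyond that the proof is a short diagram chase, so I would expect it to occupy only a few lines, essentially parallel to (and slightly shorter than) the proof of Proposition \ref{prop4_2}.
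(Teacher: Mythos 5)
Your proposal is correct and follows essentially the same route as the paper: intersect $H$ with $\Inn(W)$, apply Proposition \ref{prop3_3} to get $H\cap\Inn(W)=\{\id\}$, then use the commutator of an element of $H$ with inner automorphisms together with $Z(W)=\{1\}$ (Proposition \ref{prop2_3}) to force every element of $H$ to fix $W$ pointwise. The only cosmetic difference is that you phrase the last step as a centralizer-of-$\Inn(W)$ computation on the generators $S$, while the paper verifies $h(w)=w$ directly for all $w\in W$.
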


\begin{proof}
Let $H$ be a narrow normal subgroup of $\Aut(W)$.
The group $H\cap\Inn(W)$ is a normal narrow subgroup of $\Inn(W)\simeq W$, hence, by Proposition \ref{prop3_3}, $H\cap\Inn(W)=\{\id\}$.
Let $h\in H$.
For each $w\in W$ we have $\gamma_wh^{-1}\gamma_w^{-1}\in H$, since $H$ is normal, hence 
\[
h(\gamma_wh^{-1}\gamma_w^{-1})=(h\gamma_wh^{-1})\gamma_w^{-1}=\gamma_{h(w)}\gamma_w^{-1}\in H\cap\Inn(W)=
\{\id\}\,,
\]
thus $\gamma_{h(w)}=\gamma_w$, and therefore $h(w)=w$.
So, $h=\id$.
Thus, $H=\{\id\}$.
\end{proof}

Let $W$ be a Coxeter group.
Recall that $Z(W)$ denotes the center of $W$.
An automorphism $f\in\Aut(W)$ is called \emph{central} if $f(w)\,w^{-1}\in Z(W)$ for all $w\in W$.
To deal with the general case we will need the following result which is a direct consequence of Theorems 4.1 and 5.2 in  \cite{Paris1}.

\begin{thm}[Paris \cite{Paris1}]\label{thm4_4}
Let $W$ be a Coxeter group.
Let $W_{X_1},\dots,W_{X_p}$ be the generic components of $W$ and let $W_{X_{p+1}},\dots,W_{X_q}$ be the affine components of $W$.
Let $f\in\Aut(W)$.
There exist permutations $\sigma\in\SSS_p$ and $\tau\in\SSS_{q-p}$ and a central automorphism $\varphi\in\Aut(W)$ such that $f(W_{X_i})= \varphi(W_{X_{\sigma(i)}})$ for all $i\in\{1,\dots,p\}$, $f(W_{X_{p+j}})=\varphi(W_{X_{p+\tau(j)}})$ for all $j\in\{1,\dots,q-p\}$ and $f(W_\sph)=W_\sph$.
\end{thm}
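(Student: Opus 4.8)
The plan is to reduce the statement to the three combinatorial/structural facts that classify how an automorphism can permute and twist the irreducible components of $W$. First I would recall the full decomposition $W=W_{X_1}\times\cdots\times W_{X_\ell}=W_\gen\times W_\aff\times W_\sph$ and observe that any automorphism $f$ of $W$ permutes the set of \emph{directly indecomposable} factors up to an inner (indeed central) adjustment; this is where the cited Theorems 4.1 and 5.2 of \cite{Paris1} enter. The key point is that the irreducible components $W_{X_i}$ are, except for small spherical anomalies, precisely the directly indecomposable infinite factors of $W$, and the type (generic, affine, spherical) of a component is an isomorphism invariant — generic components are infinite non-affine, affine components are virtually $\Z^{n-1}$ with $n=|S|$, and spherical components are finite. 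Hence $f$ must send generic components to generic components and affine components to affine components, up to the central automorphism $\varphi$.

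The steps, in order, are as follows. First I would invoke \cite[Theorem 4.1]{Paris1} to write $f=\varphi\circ\psi$, where $\varphi$ is a central automorphism and $\psi$ is a ``graph-type'' automorphism that genuinely permutes the standard irreducible components; equivalently, I would quote directly the form of $\Aut(W)$ established there. Second, using that generic, affine and spherical types are preserved under isomorphism (spherical $=$ finite; affine $=$ infinite but virtually abelian of the right rank with trivial center by Proposition \ref{prop2_3}; generic $=$ everything else), I would conclude that $\psi$ restricts to a permutation $\sigma\in\SSS_p$ of the generic components and a permutation $\tau\in\SSS_{q-p}$ of the affine components, and stabilizes $W_\sph$ as a whole. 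Third, translating back through $f=\varphi\circ\psi$, I obtain $f(W_{X_i})=\varphi(\psi(W_{X_i}))=\varphi(W_{X_{\sigma(i)}})$ for generic $i$, $f(W_{X_{p+j}})=\varphi(W_{X_{p+\tau(j)}})$ for affine $j$, and $f(W_\sph)=\varphi(W_\sph)=W_\sph$, the last equality because $\varphi$, being central, acts trivially modulo $Z(W)\subseteq W_\sph$ and hence fixes $W_\sph$ setwise. This is exactly the assertion of the theorem.

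The main obstacle is the second step: ensuring that the permutation of factors respects the type decomposition rather than merely permuting indecomposable factors arbitrarily. The subtlety is that a spherical irreducible group may itself decompose as a direct product (e.g.\ $A_1\times A_1$ versus the dihedral $I_2(2)$), and conversely a standard irreducible component need not be directly indecomposable as an abstract group; so the matching of ``standard irreducible components'' with ``abstract indecomposable factors'' is not literally a bijection. The way to handle this is to work with the \emph{infinite} indecomposable factors only — these are unambiguously the generic and affine components — and to absorb all finite-factor ambiguity into the statement $f(W_\sph)=W_\sph$, which holds because $W_\sph$ is characterized as the product of all finite irreducible components and this characterization is isomorphism-invariant. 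Once the infinite part is pinned down, the permutations $\sigma$ and $\tau$ are forced by the distinction between affine (virtually abelian) and generic (full-sized, by Proposition \ref{prop3_3} applied to the restriction of $\psi$) components, and the central automorphism $\varphi$ carries the rest. All of this is precisely the content of \cite[Theorems 4.1 and 5.2]{Paris1}, so the proof is essentially a citation followed by this bookkeeping.
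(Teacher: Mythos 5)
Your proposal matches the paper's treatment: the paper gives no independent proof of Theorem \ref{thm4_4}, but states it as a direct consequence of Theorems 4.1 and 5.2 of \cite{Paris1}, which is exactly the citation you rely on. Your additional bookkeeping (type of an irreducible component is an isomorphism invariant, so the permutation splits into $\sigma\in\SSS_p$ and $\tau\in\SSS_{q-p}$, and $W_\sph$ is preserved since central automorphisms move elements only by elements of $Z(W)\subseteq W_\sph$) is correct and is the same routine step implicit in the paper's citation.
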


It is easily seen from Theorem \ref{thm4_4} that, if $Z(W)=\{1\}$, then
\[
\Aut(W)=\Aut(W_\gen)\times\Aut(W_\aff)\times\Aut(W_\sph)\,.
\]
The next step in our analysis consists of extending this decomposition to all cases (see Corollary \ref{corl4_7}).

Let $W$ be a Coxeter group.
We denote by $\hat\iota:W_\gen\times W_\aff\to W= W_\gen\times W_\aff\times W_\sph$ the embedding which sends $W_\gen\times W_\aff$ into the first two components, and we denote by $\hat\pi:W=W_\gen\times W_\aff\times W_\sph\to W_\gen\times W_\aff$ the projection onto the first two components.
For $f\in \Aut(W)$ we set
\[
\Phi(f)=\hat\pi\circ f\circ\hat\iota:W_\gen\times W_\aff\to W_\gen\times W_\aff\,.
\]

\begin{lem}\label{lem4_5}
Let $W$ be a Coxeter group.
\begin{itemize}
\item[(1)]
We have $\Phi(f)\in\Aut(W_\gen\times W_\aff)$ for all $f\in\Aut(W)$.
\item[(2)]
The map $\Phi:\Aut(W)\to\Aut(W_\gen\times W_\aff)$ is a group homomorphism.
\end{itemize}
\end{lem}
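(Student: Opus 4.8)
The plan is to reduce everything to the single fact, supplied by Theorem \ref{thm4_4} (it is literally part of its statement, and alternatively it follows from Theorem \ref{thm3_4}, which shows that $W_\sph$ is the largest finite normal subgroup of $W$ and hence characteristic), that every $f\in\Aut(W)$ satisfies $f(W_\sph)=W_\sph$. I would first record the elementary properties of the two maps in play: $\hat\iota$ and $\hat\pi$ are group homomorphisms, $\hat\pi\circ\hat\iota=\id_{W_\gen\times W_\aff}$, and $\Ker\hat\pi=W_\sph$ (viewed as the third direct factor of $W$); moreover, since $W=(W_\gen\times W_\aff)\times W_\sph$, every $u\in W$ can be written as $u=\hat\iota(\hat\pi(u))\,z_u$ with $z_u\in W_\sph$. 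Observe that $\Phi(f)=\hat\pi\circ f\circ\hat\iota$ is a composition of group homomorphisms, so $\Phi(f)$ is automatically an endomorphism of $W_\gen\times W_\aff$; thus for part (1) only bijectivity remains, and I would extract that from part (2).

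The key step, and the only place where $f(W_\sph)=W_\sph$ is used, is the identity
\[\hat\pi\bigl(f(u)\bigr)=\hat\pi\bigl(f(\hat\iota(\hat\pi(u)))\bigr)\qquad\text{for all }u\in W,\ f\in\Aut(W),\]
which says that one may discard the spherical coordinate of $u$ before applying $f$ and projecting. Indeed, writing $u=\hat\iota(\hat\pi(u))\,z_u$ with $z_u\in W_\sph$ and applying the homomorphism $f$ gives $f(u)=f(\hat\iota(\hat\pi(u)))\,f(z_u)$ with $f(z_u)\in f(W_\sph)=W_\sph=\Ker\hat\pi$, so applying the homomorphism $\hat\pi$ kills the last factor and leaves $\hat\pi(f(\hat\iota(\hat\pi(u))))$.

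Granting this identity, for $f,g\in\Aut(W)$ and $w\in W_\gen\times W_\aff$ I would put $u=g(\hat\iota(w))$ and compute
\[\Phi(f\circ g)(w)=\hat\pi(f(g(\hat\iota(w))))=\hat\pi(f(\hat\iota(\hat\pi(g(\hat\iota(w))))))=\Phi(f)\bigl(\Phi(g)(w)\bigr),\]
where the middle equality is the identity above and the last uses $\hat\pi(g(\hat\iota(w)))=\Phi(g)(w)$. Hence $\Phi(f\circ g)=\Phi(f)\circ\Phi(g)$, and since $\Phi(\id_W)=\hat\pi\circ\hat\iota=\id$ we obtain $\Phi(f)\circ\Phi(f^{-1})=\Phi(f^{-1})\circ\Phi(f)=\id$, so each $\Phi(f)$ is bijective; combined with the observation of the first paragraph this proves (1), and the multiplicativity of $\Phi$ just established is exactly (2). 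I do not expect a serious obstacle here: the only subtlety worth flagging is that $\Phi(f)$ is \emph{not} the restriction of $f$ to $W_\gen\times W_\aff$ (which need not even be $f$-invariant, because of the central automorphism appearing in Theorem \ref{thm4_4}), so one genuinely needs $f(W_\sph)=W_\sph$ in order to commute ``forgetting the spherical part'' past $f$; everything else is bookkeeping with the direct-product decomposition.
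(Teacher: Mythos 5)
Your proof is correct, and it takes a genuinely different (and leaner) route than the paper. The paper's argument invokes the full strength of Theorem \ref{thm4_4}: for each $f$ it produces a central automorphism $\varphi$ with $f(W_\gen\times W_\aff)=\varphi(W_\gen\times W_\aff)$, writes $f(w)=\Phi(f)(w)\,\zeta(w)$ with an error term $\zeta(w)\in Z(W)\subset W_\sph$ (using Proposition \ref{prop2_3}), proves injectivity and surjectivity of $\Phi(f)$ directly from this relation, and then verifies multiplicativity of $\Phi$ by tracking the various $\zeta$'s and projecting them away. You instead use only the single fact $f(W_\sph)=W_\sph$ --- which is indeed part of the statement of Theorem \ref{thm4_4}, and which, as you note, also follows from Theorem \ref{thm3_4} since $W_\sph$ is then the largest finite normal subgroup and hence characteristic --- to establish the identity $\hat\pi\circ f=\hat\pi\circ f\circ\hat\iota\circ\hat\pi$, from which multiplicativity of $\Phi$ is a one-line computation and bijectivity of each $\Phi(f)$ comes for free from $\Phi(f)\circ\Phi(f^{-1})=\Phi(\id)=\id$, since $\Phi(f)$ is already an endomorphism as a composite of homomorphisms. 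Your argument is more elementary and more general (it works verbatim for any direct product $G=A\times B$ with $B$ characteristic, without any input about central automorphisms or about $Z(W)$), and it cleanly avoids the $\zeta$-bookkeeping; what the paper's computation buys in exchange is the explicit relation $f(w)=\Phi(f)(w)\,\zeta(w)$ with $\zeta(w)\in Z(W)$, i.e.\ a concrete description of how $f$ differs from $\Phi(f)$ on $W_\gen\times W_\aff$, which is in the spirit of the later analysis of $\Ker(\Phi)$ and $\Ker(\Psi)$ via $\Hom(W_\gen\times W_\aff,Z(W))$ in Lemma \ref{lem4_6}, though that lemma re-derives what it needs from Theorem \ref{thm4_4} anyway. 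Your remark that $\Phi(f)$ is not simply the restriction of $f$ (since $W_\gen\times W_\aff$ need not be $f$-invariant) correctly identifies the one point where the hypothesis is genuinely used.
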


\begin{proof}
Let $f\in\Aut(W)$.
We first show that, for all $w\in W_\gen\times W_\aff$, there exists $\zeta(w)\in Z(W)$ such that $f(w)=\Phi(f)(w)\,\zeta(w)$.
By Theorem \ref{thm4_4} we know that there exists a central automorphism $\varphi$ such that $f(W_\gen\times W_\aff) =\varphi(W_\gen\times W_\aff)$.
We take $w'\in W_\gen\times W_\aff$ such that $f(w)=\varphi(w')$ and we set $\zeta(w)=\varphi(w')\,w '^{-1}\in Z(W)$.
By Proposition \ref{prop2_3} we have $\zeta(w)\in W_\sph$, hence $(\hat\pi\circ\varphi)(w')=w'$.
Thus,
\[
f(w)=\varphi(w')=w'\,\zeta(w)=(\hat\pi\circ\varphi)(w')\,\zeta(w)=(\hat\pi\circ f)(w)\,\zeta(w)=
\Phi(f)(w)\,\zeta(w)\,.
\]

Now we show that $\Phi(f)$ is surjective.
Let $w'\in W_\gen\times W_\aff$.
There exists $w\in W_\gen\times W_\aff$ such that $f(w)=\varphi(w')$.
Since $\varphi(w')w'^{-1}=\zeta(w)\in Z(W)\subset W_\sph$, projecting onto $W_\gen\times W_\aff$ we get $ \Phi(f)(w)=w'$.
So, $\Phi(f)$ is surjective.

Now we show that $\Phi(f)$ is injective.
Let $w\in\Ker(\Phi(f))$.
Then
\[
f(w)=\Phi(f)(w)\,\zeta(w)=\zeta(w)\in Z(w)\,.
\]
Since $f$ is an automorphism, this equality implies that $w$ is central in $W$.
But, by Proposition \ref{prop2_3}, $Z(W)\cap(W_\gen\times W_\aff)=\{1\}$, hence $w=1$.
This shows that $\Phi(f)$ is injective and thus ends the proof of the first part of the lemma.

Now we show the second part of the lemma.
Let $f_1,f_2\in\Aut(W)$.
From the above we know that, for each $w\in W_\gen\times W_\aff$, there exist $\zeta(w),\zeta_1(w),\zeta_2(w)\in Z(W)$ such that
\[
(f_1f_2)(w)=\Phi(f_1f_2)(w)\,\zeta(w)\,,\ f_1(w)=\Phi(f_1)(w)\,\zeta_1(w)\,,\ f_2(w)=\Phi(f_2)(w)\,
\zeta_2(w)\,.
\]
Let $w\in W_\gen\times W_\aff$.
Then
\begin{gather*}
\Phi(f_1f_2)(w)=(f_1f_2)(w)\,\zeta(w)^{-1}=f_1\big(\Phi(f_2)(w)\,\zeta_2(w)\big)\,\zeta(w)^{-1}=\\
f_1(\Phi(f_2)(w))\,f_1(\zeta_2(w))\,\zeta(w)^{-1}=(\Phi(f_1)\circ\Phi(f_2))(w)\, \zeta_1(\Phi(f_2)(w))
\,f_1(\zeta_2(w))\,\zeta(w)^{-1}\,.
\end{gather*}
We have $\zeta_1(\Phi(f_2)(w)),\zeta(w)\in Z(W)\subset W_\sph$ by definition and $f_1(\zeta_2(w))\in Z(W)\subset W_\sph$, since $f_1$ is an automorphism and $Z(W)$ is a characteristic subgroup.
Thus, projecting onto $W_\gen\times W_\aff$ we get
\[
\Phi(f_1f_2)(w)=(\Phi(f_1)\circ\Phi(f_2))(w)\,.
\]
So, $\Phi$ is a homomorphism.
\end{proof}

By Theorem \ref{thm4_4} we have
\[
\Aut(W_\gen\times W_\aff)=\Aut(W_\gen)\times\Aut(W_\aff)\,.
\]
Moreover, the natural embedding of $\Aut(W_\gen\times W_\aff)$ into $\Aut(W)=\Aut(W_\gen\times W_\aff\times W_\sph)$ is a section of $ \Phi$, hence $\Phi$ is surjective and we have the decomposition
\begin{equation}\label{eq4_1}
\Aut(W)=\Ker(\Phi)\rtimes\big(\Aut(W_\gen)\times\Aut(W_\aff)\big)\,.
\end{equation}

Again by Theorem \ref{thm4_4} we have $f(W_\sph)=W_\sph$ for all $f\in\Ker(\Phi)\subset\Aut(W)$.
So there is a homomorphism $\Psi:\Ker(\Phi)\to\Aut(W_\sph)$ which maps $f\in\Ker(\Phi)$ to $f|_{W_\sph}$.
The inclusion map from $\Aut(W_\sph)$ into $\Ker(\Phi)$ is a section of $\Psi$, hence $\Psi$ is surjective and we have the decomposition
\begin{equation}\label{eq4_2}
\Ker(\Phi)=\Ker(\Psi)\rtimes\Aut(W_\sph)\,.
\end{equation}

If $G$ is a group and $A$ is an abelian group, then $\Hom(G,A)$ is naturally endowed with an abelian group structure, where the product $f_1\cdot f_2$ of two elements $f_1,f_2\in\Hom(G,A)$ is defined by
\[
(f_1\cdot f_2)(w)=f_1(w)\,f_2(w)\,,\ w\in G\,.
\]
We denote by $\Hom_I(G,Z(G))$ the subgroup of $\Hom(G,Z(G))$ formed of the elements of $\Hom(G, Z(G))$ that  are the identity on $Z(G)$.
Then we have an embedding $\iota:\Hom_I(G,Z(G))\hookrightarrow\Aut(G)$ defined by
\[
\iota(f)(w)=w\,f(w)\,,\ f\in\Hom_I(G,Z(G))\,, w\in G\,.
\]
Note that the image of $\iota$ is formed by central automorphisms, but not by all central automorphisms in general.

Let $W$ be a Coxeter group.
Since $Z(W)\subset W_\sph$, we can consider $\Hom(W_\gen\times W_\aff,Z(G))$ as a subgroup of $\Hom_I(W,Z(W ))$.
Then we denote by $\Upsilon:\Hom(W_\gen\times W_\aff,Z(W))\to\Aut(W)$ the restriction of $\iota$ to $\Hom(W_\gen\times W_\aff,Z(W))$.
Note that $\Upsilon$ is injective and $\Im(\Upsilon)\subset\Ker(\Psi)$.

\begin{lem}\label{lem4_6}
Let $W$ be a Coxeter group.
\begin{itemize}
\item[(1)]
We have $\Im(\Upsilon)=\Ker(\Psi)$.
\item[(2)]
The groups $\Ker(\Psi)$ and $\Ker(\Phi)$ are finite.
\end{itemize}
\end{lem}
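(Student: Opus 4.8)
Since the inclusion $\Im(\Upsilon)\subseteq\Ker(\Psi)$ has already been recorded before the statement, the content of part (1) is the reverse inclusion, and the plan is to produce, from an arbitrary $f\in\Ker(\Psi)$, an explicit element of $\Hom(W_\gen\times W_\aff,Z(W))$ whose image under $\Upsilon$ is $f$. First I would unwind the hypothesis: $f\in\Ker(\Psi)$ means $f\in\Ker(\Phi)$ together with $f|_{W_\sph}=\id$. Feeding $\Phi(f)=\id$ into the argument in the proof of Lemma \ref{lem4_5} produces, for every $w\in W_\gen\times W_\aff$, an element $\zeta(w)\in Z(W)$ with $f(\hat\iota(w))=\hat\iota(w)\,\zeta(w)$. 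The next step is to check that $w\mapsto\zeta(w)$ is a homomorphism $W_\gen\times W_\aff\to Z(W)$: comparing $f(\hat\iota(w_1w_2))=\hat\iota(w_1w_2)\,\zeta(w_1w_2)$ with $f(\hat\iota(w_1))\,f(\hat\iota(w_2))=\hat\iota(w_1)\,\zeta(w_1)\,\hat\iota(w_2)\,\zeta(w_2)=\hat\iota(w_1w_2)\,\zeta(w_1)\,\zeta(w_2)$ and using that $\zeta(w_1)\in Z(W)$ commutes with everything, one reads off $\zeta(w_1w_2)=\zeta(w_1)\,\zeta(w_2)$.

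Then I would identify $f$ with $\Upsilon(\zeta)$. Writing a general element of $W$ as $w=\hat\iota(w')\,w''$ with $w'\in W_\gen\times W_\aff$ and $w''\in W_\sph$, the identities $f|_{W_\sph}=\id$ and $f(\hat\iota(w'))=\hat\iota(w')\,\zeta(w')$, the fact that $\hat\iota(W_\gen\times W_\aff)$ and $W_\sph$ commute in $W$, and the centrality of $\zeta(w')\in Z(W)\subseteq W_\sph$ give $f(w)=\hat\iota(w')\,\zeta(w')\,w''=w\,\zeta(w')$, which is exactly the value of $\Upsilon(\zeta)$ at $w$. Hence $f=\Upsilon(\zeta)\in\Im(\Upsilon)$, which proves part (1).

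For part (2) the plan is to reduce everything to the finiteness of $W_\sph$. From (\ref{eq4_2}) we have $\Ker(\Phi)=\Ker(\Psi)\rtimes\Aut(W_\sph)$, and $\Aut(W_\sph)$ is finite because $W_\sph$ is a finite Coxeter group; so it is enough to show that $\Ker(\Psi)$ is finite. By part (1), and since $\Upsilon$ is injective, $\Upsilon$ gives a bijection from $\Hom(W_\gen\times W_\aff,Z(W))$ onto $\Ker(\Psi)$, so it suffices to see that $\Hom(W_\gen\times W_\aff,Z(W))$ is finite. This is clear: $W_\gen\times W_\aff=W_{X_1}\times\cdots\times W_{X_q}$ is generated by the finite set $X_1\cup\cdots\cup X_q\subseteq S$, the group $Z(W)$ is finite since $Z(W)\subseteq W_\sph$, and a homomorphism to $Z(W)$ is determined by the (finitely many possible) images of these generators. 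Therefore $\Ker(\Psi)$, and then by (\ref{eq4_2}) also $\Ker(\Phi)$, is finite.

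I do not expect a serious obstacle here; the only point requiring a little care is the first step — invoking Lemma \ref{lem4_5} correctly to extract $\zeta$ and then verifying multiplicativity — and even this is a short computation once one keeps track of which elements lie in $Z(W)\subseteq W_\sph$. The remainder is routine bookkeeping with (\ref{eq4_2}), Lemma \ref{lem4_5}, and the finiteness of the spherical part.
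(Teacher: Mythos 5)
Your proof is correct and follows essentially the same route as the paper: both arguments come down to showing that $h(w)=f(w)\,w^{-1}$ defines a homomorphism $W_\gen\times W_\aff\to Z(W)$ with $f=\Upsilon(h)$, and both deduce part (2) from the finite generation of $W_\gen\times W_\aff$, the finiteness of $Z(W)\subseteq W_\sph$, and Equation (\ref{eq4_2}). The only cosmetic difference is that you extract the relation $f(w)=\Phi(f)(w)\,\zeta(w)$ from the proof of Lemma \ref{lem4_5}, whereas the paper re-invokes Theorem \ref{thm4_4} to produce the central automorphism $\varphi=f$ directly.
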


\begin{proof}
The inclusion $\Im(\Upsilon)\subset\Ker(\Psi)$ is obvious, so we only need to show $\Ker(\Psi)\subset\Im(\Upsilon)$.
Let $f\in\Ker(\Psi)$.
By Theorem \ref{thm4_4} there exists a central automorphism $\varphi\in\Aut(W)$ such that $f(W_\gen)=\varphi(W_\gen)$, $f(W_\aff)=\varphi( W_\aff)$ and $f(W_\sph)=W_\sph$.
Since $f\in\Ker(\Psi)$, we necessarily have $f=\varphi$ and $\varphi$ is the identity on $W_\sph$.
Let $h:W_\gen\times W_\aff\to Z(W)$ be the map defined by $h(w)=\varphi(w)\,w^{-1}$.
For all $w_1,w_2\in W_\gen\times W_\aff$,
\begin{gather*}
h(w_1w_2)=\varphi(w_1w_2)\,w_2^{-1}w_1^{-1}=\varphi(w_1)\,\varphi(w_2)\,w_2^{-1}w_1^{-1}=\varphi(w_1)
\,h(w_2)\,w_1^{-1} =\\
\varphi(w_1)\,w_1^{-1}\,h(w_2)=h(w_1)\,h(w_2)\,,
\end{gather*}
hence $h\in\Hom(W_\gen\times W_\aff,Z(W))$.
It is easily seen that $f=\varphi=\Upsilon(h)$.
So, $\Ker(\Psi)\subset\Im(\Upsilon)$.

The group $\Hom(W_\gen\times W_\aff,Z(W))$ is finite since $W_\gen\times W_\aff$ is finitely generated and $Z(W)$ is finite.
Since $\Ker(\Psi)$ is isomorphic to $\Hom(W_\gen\times W_\aff,Z(G))$, it follows that $\Ker(\Psi)$ is finite.
Finally, Equation (\ref{eq4_2}) implies that $\Ker(\Phi)$ is also finite.
\end{proof}

The following corollary is easily obtained by combining Equations (\ref{eq4_1}) and (\ref{eq4_2}) with Lemma \ref{lem4_6}.

\begin{corl}\label{corl4_7}
Let $W$ be a Coxeter group.
Then
\[
\Aut(W)\simeq\big(\Hom(W_\gen\times W_\aff,Z(W))\rtimes\Aut(W_\sph)\big)\rtimes\big(\Aut(W_\gen)\times
\Aut(W_\aff)\big)\,.
\]
\end{corl}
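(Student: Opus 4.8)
The plan is to deduce the corollary purely formally by chaining together the two splitting identities~(\ref{eq4_1}) and~(\ref{eq4_2}) and then substituting the group isomorphism $\Ker(\Psi)\simeq\Hom(W_\gen\times W_\aff,Z(W))$ that is implicit in Lemma~\ref{lem4_6}. No new geometric input is needed; all the relevant structure has already been set up.

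Concretely, I would proceed as follows. First, recall from Equation~(\ref{eq4_1}) that $\Aut(W)=\Ker(\Phi)\rtimes\big(\Aut(W_\gen)\times\Aut(W_\aff)\big)$. Next, substitute the decomposition $\Ker(\Phi)=\Ker(\Psi)\rtimes\Aut(W_\sph)$ from Equation~(\ref{eq4_2}) into the normal factor, obtaining
\[
\Aut(W)=\big(\Ker(\Psi)\rtimes\Aut(W_\sph)\big)\rtimes\big(\Aut(W_\gen)\times\Aut(W_\aff)\big)\,.
\]
Finally, by Lemma~\ref{lem4_6}(1) we have $\Ker(\Psi)=\Im(\Upsilon)$, and since $\Upsilon$ is an injective homomorphism it restricts to a group isomorphism $\Hom(W_\gen\times W_\aff,Z(W))\to\Ker(\Psi)$. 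Replacing a normal subgroup in a semidirect product by an isomorphic copy (transporting the complement's action along the isomorphism) yields an isomorphic group, so this turns the displayed equality into
\[
\Aut(W)\simeq\big(\Hom(W_\gen\times W_\aff,Z(W))\rtimes\Aut(W_\sph)\big)\rtimes\big(\Aut(W_\gen)\times\Aut(W_\aff)\big)\,,
\]
which is exactly the assertion.

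There is essentially no obstacle here: the only point that needs a word of care is the last substitution, namely that $\Upsilon$ identifies $\Hom(W_\gen\times W_\aff,Z(W))$ with $\Ker(\Psi)$ as groups, so that the two-fold semidirect product may legitimately be rewritten with $\Hom(W_\gen\times W_\aff,Z(W))$ in place of $\Ker(\Psi)$. Since the statement asks only for an abstract isomorphism $\simeq$, there is no need to make the conjugation actions of $\Aut(W_\sph)$ and of $\Aut(W_\gen)\times\Aut(W_\aff)$ explicit, and the proof is therefore short.
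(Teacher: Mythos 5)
Your argument is correct and is essentially the same as the paper's: the corollary is stated there as an immediate consequence of combining Equation (\ref{eq4_1}), Equation (\ref{eq4_2}) and Lemma \ref{lem4_6}, which is exactly the chain of substitutions you carry out (including the identification $\Ker(\Psi)=\Im(\Upsilon)\simeq\Hom(W_\gen\times W_\aff,Z(W))$ via the injectivity of $\Upsilon$).
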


We turn now to prove the main result of the chapter.

\begin{thm}\label{thm4_8}
Let $W$ be a Coxeter group and let $H$ be a normal subgroup of $\Aut(W)$.
\begin{itemize}
\item[(1)]
The group $H$ is narrow if and only if $H$ is a subgroup of $\Ker(\Phi)\rtimes\Aut(W_\aff)$.
In that case $H$ is virtually $\Z^m$ for some $m\ge0$.
\item[(2)]
The group $H$ is finite if and only if $H$ is a subgroup of $\Ker(\Phi)$.
\end{itemize}
\end{thm}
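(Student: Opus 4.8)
\emph{Proof strategy.}
The plan is to exploit the homomorphism $\Phi$ of Lemma \ref{lem4_5} together with the decomposition (\ref{eq4_1}). Composing $\Phi\colon\Aut(W)\to\Aut(W_\gen\times W_\aff)=\Aut(W_\gen)\times\Aut(W_\aff)$ with the two coordinate projections produces surjective homomorphisms $\pi_\gen\colon\Aut(W)\to\Aut(W_\gen)$ and $\pi_\aff\colon\Aut(W)\to\Aut(W_\aff)$. Since the natural embedding of $\Aut(W_\gen\times W_\aff)$ into $\Aut(W)$ is a section of $\Phi$, one reads off directly that $\Ker(\pi_\gen)=\Ker(\Phi)\rtimes\Aut(W_\aff)$, that $\Ker(\pi_\aff)=\Ker(\Phi)\rtimes\Aut(W_\gen)$, and that $\Ker(\pi_\gen)\cap\Ker(\pi_\aff)=\Ker(\Phi)$. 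The entire argument then amounts to pushing a narrow, respectively finite, normal subgroup $H$ of $\Aut(W)$ forward along $\pi_\gen$ and $\pi_\aff$ and showing the images are trivial.

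Two auxiliary facts are needed: (A) $\Aut(W_\gen)$ has no non-trivial narrow normal subgroup, and (B) $\Aut(W_\aff)$ has no non-trivial finite normal subgroup. Both would be proved exactly as Proposition \ref{prop4_3}. For (A): each generic irreducible component has trivial centre by Proposition \ref{prop2_3}(1), so $Z(W_\gen)=\{1\}$, whence $\gamma$ identifies $W_\gen$ with $\Inn(W_\gen)$, a normal subgroup of $\Aut(W_\gen)$. If $K$ is a narrow normal subgroup of $\Aut(W_\gen)$, then $K\cap\Inn(W_\gen)$ is a narrow normal subgroup of $W_\gen$; the affine and spherical parts of $W_\gen$ being trivial, Theorem \ref{thm3_4}(2) forces $K\cap\Inn(W_\gen)=\{\id\}$, so $K$ centralises $\Inn(W_\gen)$, and then $\gamma_{f(w)}=f\gamma_wf^{-1}=\gamma_w$ for all $f\in K$, $w\in W_\gen$ gives $f=\id$. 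Statement (B) follows the same way, with $Z(W_\aff)=\{1\}$ and Theorem \ref{thm3_4}(1) (the spherical part of $W_\aff$ is trivial) replacing Theorem \ref{thm3_4}(2).

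With (A) and (B) in hand, part (1) would be finished as follows. If $H$ is narrow, then $\pi_\gen(H)$ is a quotient of $H$, hence narrow (a quotient of a narrow group is narrow, since $F_2$ is projective), and it is normal in $\Aut(W_\gen)$, so $\pi_\gen(H)=\{\id\}$ by (A) and $H\subseteq\Ker(\pi_\gen)=\Ker(\Phi)\rtimes\Aut(W_\aff)$. For the converse, and for the final claim of (1), note that $\Ker(\Phi)$ is finite by Lemma \ref{lem4_6}(2), while $\Aut(W_\aff)$ is virtually $\Z^N$, where $N$ is the sum of the integers $n_i-1$ over the affine components: by Theorem \ref{thm4_1} the automorphism group of an affine component $W_{X_i}$ is $W_{X_i}\rtimes\Aut(\Gamma_{X_i})$ and contains the translation lattice $\Z^{n_i-1}$ with finite index, and by Theorem \ref{thm4_4} (using $Z(W_\aff)=\{1\}$) the group $\Aut(W_\aff)$ is, modulo a finite group permuting isomorphic affine components, the direct product of these. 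Hence $\Ker(\Phi)\rtimes\Aut(W_\aff)$ contains a copy of $\Z^N$ with finite index; so every one of its subgroups is narrow, and intersecting $H$ with that copy of $\Z^N$ shows $H$ is virtually $\Z^m$ for some $0\le m\le N$. For part (2): $\Leftarrow$ is clear as $\Ker(\Phi)$ is finite; for $\Rightarrow$, a finite $H$ is in particular narrow, so $H\subseteq\Ker(\Phi)\rtimes\Aut(W_\aff)=\Ker(\pi_\gen)$ by part (1), whereas $\pi_\aff(H)$ is a finite normal subgroup of $\Aut(W_\aff)$ and hence trivial by (B), so $H\subseteq\Ker(\pi_\aff)$; therefore $H\subseteq\Ker(\pi_\gen)\cap\Ker(\pi_\aff)=\Ker(\Phi)$.

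I expect no serious obstacle: almost everything is an assembly of the already-established Theorems \ref{thm3_4}, \ref{thm4_1}, \ref{thm4_4} and Lemma \ref{lem4_6}. The two points that need a little care are the verification of the kernels of $\pi_\gen$ and $\pi_\aff$ from (\ref{eq4_1}), and, in the converse of (1), the extraction from Theorems \ref{thm4_1} and \ref{thm4_4} of the fact that $\Aut(W_\aff)$ is virtually free abelian of finite rank. The only genuinely new input is the elementary observation (A)/(B): for a Coxeter group $W'$ that is a product of irreducible components all of the same kind — hence centreless — one can transport a narrow or finite normal subgroup of $\Aut(W')$ down to $W'$ through $\Inn(W')$ and kill it using Theorem \ref{thm3_4}.
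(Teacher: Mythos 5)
Your proof is correct, and its skeleton coincides with the paper's: both use the surjection $\Phi$ and the decomposition (\ref{eq4_1}) to reduce the forward implications to showing that $\Aut(W_\gen)$ has no non-trivial narrow normal subgroup, and both obtain the converse of (1) and the ``virtually $\Z^m$'' statement by observing that $\Ker(\Phi)$ is finite (Lemma \ref{lem4_6}) while $\Aut(W_\aff)$ is virtually free abelian of finite rank (via Theorem \ref{thm4_4} and Theorem \ref{thm4_1}/Proposition \ref{prop4_2}). Where you genuinely diverge is in how the auxiliary non-existence statements are established. The paper stays componentwise: it introduces the permutation homomorphisms $\beta$ (and $\alpha$ for part (2)) coming from Theorem \ref{thm4_4}, kills $K\cap\Ker(\beta)$ using the irreducible results (Propositions \ref{prop4_2} and \ref{prop4_3}), and then rules out component-permuting elements $h$ by the explicit computation $h\gamma_wh^{-1}\gamma_w^{-1}=\gamma_{h(w)}\gamma_w^{-1}$, which would force $\gamma_{h(w)}=\gamma_w$ across two distinct components. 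You instead prove your facts (A) and (B) wholesale for the reducible groups $W_\gen$ and $W_\aff$: since these are centerless (Proposition \ref{prop2_3}), $\Inn$ is an isomorphic copy of the group, Theorem \ref{thm3_4} kills $K\cap\Inn$, and the standard fact that two normal subgroups with trivial intersection commute yields $\gamma_{f(w)}=\gamma_w$ and hence $f=\id$ --- i.e.\ you run the proof of Proposition \ref{prop4_3} directly at the level of the whole generic (resp.\ affine) part, bypassing the $\beta$/$\alpha$ bookkeeping and the case analysis over swapped components. You also deduce part (2) from part (1) together with (B) applied to $\pi_\aff(H)$, rather than giving the paper's separate parallel argument for $\Aut(W_\gen)\times\Aut(W_\aff)$. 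Your route is slightly shorter and more conceptual (it leans on Theorem \ref{thm3_4} for reducible groups plus the disjoint-normal-subgroups trick), while the paper's route reuses the irreducible Propositions \ref{prop4_2} and \ref{prop4_3} verbatim and keeps all the information at the level of individual components; both are complete, and your incidental justifications (images of narrow groups are narrow because free groups lift through surjections; subgroups of virtually $\Z^N$ groups are virtually $\Z^m$) are sound.
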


\begin{proof}
We first prove that there exists $n\ge0$ such that $\Aut(W_\aff)$ is virtually $\Z^n$.
Let $W_{X_{p+1}},\dots,W_{X_q}$ be the affine components of $W$.
By Theorem \ref{thm4_4} there exists a homomorphism $\alpha:\Aut(W_\aff)\to\SSS_{q-p}$ such that, for all $f\in \Aut(W_\aff)$ and all $j\in\{1,\dots,q-p\}$, we have $f(W_{X_{p+j}})=W_{X_{p+\alpha(f)(j)}}$.
Note that
\[
\Ker(\alpha)=\Aut(W_{X_{p+1}})\times\cdots\times\Aut(W_{X_q})\,.
\]
By Proposition \ref{prop4_2} each $\Aut(W_{X_{p+j}})$ is virtually $\Z^{|X_{p+j}|-1}$, so $\Ker(\alpha)$ is virtually $\Z^n$, where $n=\left(\sum_{j=1}^{q-p}|X_{p+j}|\right)-q+p$.
Since $\Ker(\alpha)$ is a finite index subgroup of $\Aut(W_\aff)$, we conclude that $\Aut(W_\aff)$ is also virtually $\Z^n$.

By Lemma \ref{lem4_6} $\Aut(W_\aff)$ has finite index in $\Ker(\Phi)\rtimes\Aut(W_\aff)$, hence $\Ker(\Phi)\rtimes\Aut(W_\aff)$ is virtually $\Z^n$, and therefore any subgroup of $\Ker(\Phi)\rtimes\Aut(W_\aff)$ is virtually $\Z^m$ for some $m\ge0$.

Let $H$ be a normal subgroup of $\Aut(W)$.
Suppose $H$ is a subgroup of $\Ker(\Phi)\rtimes\Aut(W_\aff)$.
Then, by the above, $H$ is virtually $\Z^m$ for some $m\ge0$.
In particular, $H$ is narrow.

Suppose $H$ is narrow.
We denote by $\pi_\gen:\Aut(W_\gen)\times\Aut(W_\aff)\to\Aut(W_\gen)$ the projection onto the first component and we consider the composition $\pi_\gen\circ\Phi:\Aut(W)\to\Aut(W_\gen)$.
The kernel of $\pi_\gen\circ\Phi$ is $\Ker(\Phi)\rtimes\Aut(W_\aff)$, hence, in order to show that $H\subset\Ker(\Phi)\rtimes\Aut(W_\aff)$, it suffices to show that $(\pi_\gen\circ\Phi)(H)=\{1\}$.
Since $\pi_\gen\circ\Phi$ is surjective, $(\pi_\gen\circ\Phi)(H)$ is a narrow normal subgroup of $\Aut(W_\gen)$, hence it suffices to show that $\Aut(W_\gen)$ contains no non-trivial narrow normal subgroup.

Let $K$ be a narrow normal subgroup of $\Aut(W_\gen)$.
Let $W_{X_1},\dots,W_{X_p}$ be the generic components of $W$.
By Theorem \ref{thm4_4} there exists a homomorphism $\beta:\Aut(W_\gen)\to\SSS_p$ such that, for all $f\in\Aut(W_\gen)$ and all $i\in\{1,\dots,p\}$, we have $f(W_{X_{i}})=W_{X_{\beta(f)(i)}}$.
Note that
\[
\Ker(\beta)=\Aut(W_{X_{1}})\times\cdots\times\Aut(W_{X_p})\,.
\]
For each $i\in\{1,\dots,p\}$ the projection of $K\cap\Ker(\beta)$ onto $\Aut(W_{X_i})$ is a narrow normal subgroup of $\Aut(W_{X_i})$, hence, by Proposition \ref{prop4_3}, this subgroup is trivial.
We deduce that $K\cap\Ker(\beta)=\{1\}$.
Suppose $K\neq\{1\}$.
Let $h\in K\setminus\{1\}$.
We have $\beta(h)\neq1$ since $K\cap\Ker(\beta)=\{1\}$.
So, there exist $i,j\in\{1,\dots,p\}$, $i\neq j$, such that $h(W_{X_i})=W_{X_j}$.
Let $w\in W_{X_i}\setminus\{1\}$.
We have $\gamma_wh^{-1}\gamma_w^{-1}\in K$, since $K$ is normal, hence
\[
h(\gamma_wh^{-1}\gamma_w^{-1})=(h\gamma_wh^{-1})\gamma_w^{-1}=\gamma_{h(w)}\gamma_{w^{-1}}\in K\cap
\Ker(\beta)=\{1\}\,.
\]
It follows that $\gamma_{h(w)}=\gamma_w$, which is not possible because $\gamma_w\in\Aut(W_{X_i})$, $\gamma_{h(w)}\in\Aut(W_{X_j})$, and these two automorphisms are non-trivial.
So, $K=\{1\}$.

The proof of Part (2) is similar to that of Part (1).
Let $H$ be a normal subgroup of $\Aut(W)$.
We know from Lemma \ref{lem4_6} that $\Ker(\Phi)$ is finite, hence, if $H$ is a subgroup of $\Ker(\Phi)$, then $H$ is finite.

Suppose $H$ is finite.
We want to show that $H\subset\Ker(\Phi)$, that is, that $\Phi(H)=\{1\}$.
Since $\Phi$ is surjective, $\Phi(H)$ is a finite normal subgroup of $\Aut(W_\gen)\times\Aut(W_\aff)$, hence it suffices to show that $\Aut(W_\gen)\times\Aut(W_\aff)$ contains no non-trivial finite normal subgroup.

Let $K$ be a finite normal subgroup of $\Aut(W_\gen)\times\Aut(W_\aff)=\Aut(W_\gen\times W_\aff)$.
Let $W_{X_1},\dots,W_{X_p}$ be the generic components of $W$ and let $W_{X_{p+1}},\dots,W_{X_q}$ be the affine components of $W$.
By Theorem \ref{thm4_4} there exist homomorphisms $\beta:\Aut(W_\gen\times W_\aff)\to\SSS_p$ and $\alpha:\Aut(W_\gen\times W_\aff)\to\SSS_{q-p}$ such that, for all $f\in\Aut(W_\gen\times W_\aff)$, all $i\in\{1,\dots,p\}$ and all $j\in\{1,\dots,q-p\}$, we have $f(W_{X_{i}})=W_{X_{\beta(f)(i)}}$ and $f(W_{X_{p+j}})=W_{X_{p+\alpha(f)(j)}}$.
Note that 
\[
\Ker(\beta)\cap\Ker(\alpha)=\Aut(W_{X_{1}})\times\cdots\times\Aut(W_{X_p})\times\Aut(W_{X_{p+1}})\times
\cdots\times\Aut(W_{X_q})\,.
\]
For each $i\in\{1,\dots,p\}$ the projection of $K\cap\Ker(\beta)\cap\Ker(\alpha)$ onto $\Aut(W_{X_i})$ is a finite normal subgroup of $\Aut(W_{X_i})$, hence, by Proposition \ref{prop4_3}, this subgroup is trivial.
Similarly, for each $j\in\{1,\dots,q-p\}$ the projection of $K\cap\Ker(\beta)\cap\Ker(\alpha)$ onto $\Aut(W_{X_{p+j}})$ is a finite normal subgroup of $\Aut(W_{X_{p+j}})$, hence, by Proposition \ref{prop4_2}, this subgroup is trivial.
So, $K\cap\Ker(\beta)\cap\Ker(\alpha)=\{1\}$.
Suppose $K\neq\{1\}$.
Let $h\in K\setminus\{1\}$.
We have $\beta(h)\neq 1$ or $\alpha(h)\neq 1$, since $K\cap\Ker(\beta)\cap\Ker(\alpha)=\{1\}$.
We assume that $\beta(h)\neq 1$.
The case $\alpha(h)\neq 1$ can be treated in the same way.
There exist $i,j\in\{1,\dots,p\}$, $i\neq j$, such that $h(W_{X_i})=W_{X_j}$.
Let $w\in W_{X_i}\setminus\{1\}$.
We have $\gamma_wh^{-1}\gamma_w^{-1}\in K$ since $K$ is normal, hence
\[
h(\gamma_wh^{-1}\gamma_w^{-1})=(h\gamma_wh^{-1})\gamma_w^{-1}=\gamma_{h(w)}\gamma_{w}^{-1}\in K\cap
\Ker(\beta)\cap\Ker(\alpha)=\{1\}\,.
\]
It follows that $\gamma_{h(w)}=\gamma_w$, which is not possible because $\gamma_w\in\Aut(W_{X_i})$, $\gamma_{h(w)}\in\Aut(W_{X_j})$ and both automorphisms are non-trivial.
So, $K=\{1\}$.
\end{proof}

\begin{corl}\label{corl4_9}
Let $W$ be a Coxeter group and let $H$ be a normal subgroup of $\Aut(W)$.
\begin{itemize}
\item[(1)]
Assume that $W$ has no spherical component, that is, $W_\sph=\{1\}$.
Then $\Aut(W)$ has no non-trivial finite normal subgroup, and $H$ is narrow if and only if $H$ is a subgroup of $\Aut(W_\aff)$.
\item[(2)]
Assume that $W$ has no spherical component and no affine component, that is, $W_\sph=W_\aff=\{1\}$.
Then $\Aut(W)$  has no non-trivial narrow normal subgroup.
\end{itemize}
\end{corl}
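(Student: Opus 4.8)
The plan is to read off Corollary \ref{corl4_9} from Theorem \ref{thm4_8}, the only additional ingredient being the description of $\Ker(\Phi)$ furnished by Lemma \ref{lem4_6} (equivalently, by Corollary \ref{corl4_7}) together with the inclusion $Z(W)\subseteq W_\sph$ recorded in Proposition \ref{prop2_3}.

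First I would show that $\Ker(\Phi)=\{1\}$ whenever $W_\sph=\{1\}$. Indeed, since $Z(W)\subseteq W_\sph$ by Proposition \ref{prop2_3}, the hypothesis $W_\sph=\{1\}$ forces $Z(W)=\{1\}$; hence $\Hom(W_\gen\times W_\aff,Z(W))=\{1\}$ and $\Aut(W_\sph)=\{1\}$, so Lemma \ref{lem4_6}(1) and Equation (\ref{eq4_2}) give $\Ker(\Psi)=\{1\}$ and then $\Ker(\Phi)=\{1\}$. Equation (\ref{eq4_1}) thus reduces to $\Aut(W)=\Aut(W_\gen)\times\Aut(W_\aff)$, and the copy of $\Aut(W_\aff)$ named in the statement is precisely its second factor.

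For Part (1), I would then invoke Theorem \ref{thm4_8} twice: by part (2) of that theorem, a normal subgroup $H$ is finite if and only if $H\subseteq\Ker(\Phi)=\{1\}$, so $\Aut(W)$ has no non-trivial finite normal subgroup; by part (1), $H$ is narrow if and only if $H\subseteq\Ker(\Phi)\rtimes\Aut(W_\aff)=\Aut(W_\aff)$. For Part (2), adding the hypothesis $W_\aff=\{1\}$ makes $\Aut(W_\aff)=\{1\}$, so the criterion just obtained says that a normal subgroup of $\Aut(W)$ is narrow if and only if it is trivial.

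I do not expect any genuine obstacle: the corollary is a formal consequence of Theorem \ref{thm4_8}. The one point demanding a little care is to make sure that the subgroup $\Aut(W_\aff)\le\Aut(W)$ appearing in the corollary coincides with the factor arising in the semidirect-product decomposition (\ref{eq4_1}); this is taken care of once $\Ker(\Phi)$ is seen to collapse to $\{1\}$, as in the first step.
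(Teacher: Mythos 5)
Your proposal is correct and follows what is essentially the paper's own (implicit) route: Corollary \ref{corl4_9} is stated there as an immediate specialization of Theorem \ref{thm4_8}, and your key observation that $W_\sph=\{1\}$ forces $Z(W)=\{1\}$ and hence $\Ker(\Phi)=\{1\}$ via Lemma \ref{lem4_6} and Equations (\ref{eq4_1})--(\ref{eq4_2}) is exactly the intended reduction. Nothing is missing.
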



\section{Automatic continuity for Coxeter groups and their automorphism groups}\label{sec5}

In this chapter we provide an application of Corollary \ref{corl3_5}\,(i) and Corollary \ref{corl4_9}\,(i) in the direction of automatic continuity of Coxeter groups and their automorphism groups.

Given a map $\varphi\colon L\to G$ between a locally compact Hausdorff group  $L$ and a discrete group $G$, the \emph{automatic continuity problem} is the following: assuming $\varphi$ is a
group homomorphism on the level of groups, find conditions on $G$ or $\varphi$ which imply
that $\varphi$ is continuous.

A discrete group $G$ is called \emph{lcH-slender} if every group homomorphism $\varphi\colon L\to G$ on the level of groups (i.e. abstract group homomorphism) where $L$ is a locally compact Hausdorff group is continuous. 
Many groups are known to be lcH-slender, for example free and free abelian groups (see Dudley \cite{Dudley}).

We call a discrete group $G$ \emph{almost lcH-slender} if every abstract surjective group homomorphism $\varphi\colon L\to G$ from  a locally compact Hausdorff group $L$ onto $G$ is continuous.
An algebraic description of almost lcH-slender groups was proven in \cite[Theorem B]{KeppelerMoellerVarghese}. 
Here we recall a weaker version of this theorem which is suitable for Coxeter groups and their automorphism groups.

\begin{thm}[Keppeler--M\"oller--Varghese \cite{KeppelerMoellerVarghese}]\label{thm5_1}
Let $\varphi\colon L\twoheadrightarrow G$ be an epimorphism from a locally compact Hausdorff group $L$ to a countable group $G$. 
If every torsion subgroup of $G$ is finite, $G$ does not contain $\mathbb{Q}$ and $G$ does not have non-trivial finite normal subgroups, then $\varphi$ is continuous.
\end{thm}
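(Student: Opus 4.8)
Since $G$ is discrete, $\varphi$ is continuous if and only if $\Ker(\varphi)$ is open in $L$, and a subgroup of a topological group is open as soon as it has non-empty interior; so the plan is to exhibit an open subgroup of $L$ on which $\varphi$ is trivial. The overall strategy is to use the structure theory of locally compact groups to isolate the ``non-rigid'' directions of $L$ that could carry a discontinuous homomorphism, and to show that the three hypotheses on $G$ forbid all of them. First I would treat the identity component. The subgroup $L^{\circ}$ is characteristic, hence normal, so by surjectivity $\varphi(L^{\circ})$ is a normal subgroup of $G$; I will show it is trivial, pass to the totally disconnected quotient $D=L/L^{\circ}$, and by van Dantzig's theorem choose a compact open (hence profinite) subgroup $U$ of $D$. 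Writing $\bar\varphi\colon D\twoheadrightarrow G$ for the induced surjection, it then suffices to prove that $\Ker(\bar\varphi)\cap U$ is open in $U$, since this pulls back to an open subgroup of $L$ contained in $\Ker(\varphi)$.

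For the identity component I would use divisibility. The key algebraic remark is that, under our hypotheses, $G$ has \emph{no non-trivial divisible abelian subgroup}: a non-trivial divisible abelian group contains a copy of $\Q$ or of some Pr\"ufer group $\Z(p^{\infty})$, the first being excluded by hypothesis and the second being an infinite torsion subgroup, which is excluded because all torsion subgroups of $G$ are finite. On the other hand, by Iwasawa's structure theorem a connected locally compact group is covered by finitely many one-parameter subgroups together with a maximal compact subgroup, the latter being connected; every one-parameter subgroup is an abstract image of $\R$ or of $S^{1}=\R/\Z$, and every element of a compact connected group lies in a torus or solenoid, so each element of $L^{\circ}$ is a product of elements lying in divisible abelian subgroups. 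Since $G$ has no non-trivial divisible abelian subgroup, $\varphi$ annihilates all of these, whence $\varphi(L^{\circ})=\{1\}$ and $\bar\varphi$ is well defined.

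The profinite part is where I expect the genuine difficulty, and where the remaining strength of the hypotheses is spent. The first task is to show that $\bar\varphi(U)$ is \emph{finite}. The underlying abelian group of a profinite group is algebraically compact (pure-injective), and thus splits as a divisible part, killed by the remark above, together with a reduced algebraically compact part built from $p$-adic modules; controlling abstract homomorphisms out of such modules into a countable group whose torsion subgroups are finite and which contains no copy of $\Q$ is a slenderness-type statement that should force $\bar\varphi(U)$ to be finite. The second, and subtler, task is to upgrade ``$\bar\varphi(U)$ finite'' to ``$\Ker(\bar\varphi)\cap U$ open''. When $U$ is topologically finitely generated this is immediate from the Nikolov--Segal theorem that finite-index subgroups are then open; in general a finite-index subgroup of a profinite group need not be open, and this is exactly the mechanism that produces discontinuous surjections onto finite groups. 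This is precisely the point at which the hypothesis that $G$ has no non-trivial finite normal subgroup becomes indispensable: using that $\bar\varphi$ is surjective and $G$ countable, the finite images arising from the profinite directions must be organised, via Baire category on the locally compact group $D$ and a conjugation argument over the countably many cosets of $U$, into a finite normal subgroup of $G$, which the hypothesis then forces to be trivial, yielding openness of the kernel.

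Assembling the pieces, $\varphi$ is trivial on $L^{\circ}$ and $\Ker(\bar\varphi)\cap U$ is open in $U$, so $\Ker(\varphi)$ contains an open subgroup of $L$ and is therefore open, giving continuity of $\varphi$. The main obstacle is the profinite analysis of the third paragraph: establishing finiteness of $\bar\varphi(U)$ from the finite-torsion and no-$\Q$ hypotheses, and then converting finiteness of the image into openness of the kernel by means of the no-finite-normal-subgroup hypothesis. By contrast, the reductions via van Dantzig and the divisibility treatment of the identity component are comparatively routine.
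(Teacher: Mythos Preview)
The paper does not give its own proof of this theorem: it is quoted as a black box from Keppeler--M\"oller--Varghese \cite{KeppelerMoellerVarghese} (their Theorem~B), so there is nothing in the present paper to compare your attempt against. Your task here was really to recognise that this is a cited result rather than something the authors prove.

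That said, your sketch follows the standard architecture used in \cite{KeppelerMoellerVarghese} and related automatic-continuity papers: kill the identity component $L^{\circ}$ by a divisibility argument, descend to the totally disconnected quotient, pick a compact open subgroup via van Dantzig, and then analyse the restriction to that profinite piece. You also correctly locate the genuine work in the profinite step and correctly pair each of the three hypotheses with the obstruction it removes.

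One point deserves tightening. Your appeal to ``the underlying abelian group of a profinite group is algebraically compact'' is not the right lever: $U$ need not be abelian, so statements about its additive structure do not apply directly, and the image $\bar\varphi(U)$ is not obviously torsion (think of $\Z_p\subset U$). In \cite{KeppelerMoellerVarghese} the finiteness of $\bar\varphi(U)$ and the openness of the kernel are obtained by a Baire-category argument in the locally compact group together with the countability of $G$, combined with the ``small torsion'' hypothesis; the no-$\Q$ hypothesis enters when handling abelian pro-$p$ directions, and the no-finite-normal-subgroup hypothesis is used, as you say, to pass from a finite image to an open kernel. So your outline is morally correct, but the justification you give for finiteness of $\bar\varphi(U)$ would need to be replaced by the actual argument from the cited paper.
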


Combining Corollary \ref{corl3_5}\,(i) with Theorem \ref{thm5_1} we obtain a characterization of almost lcH-slender Coxeter groups.

\begin{corl}
Let $W$ be a Coxeter group. We decompose $W$ as a direct product $W_\gen\times W_\aff\times W_\sph$.
The group $W$ is almost lcH-slender if and only if $W_\sph=\left\{1\right\}$.
\end{corl}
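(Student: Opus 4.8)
The plan is to prove the two implications separately. For the implication ``$W_\sph=\{1\}\Rightarrow W$ is almost lcH-slender'' the idea is to check that $W$ meets the three hypotheses imposed on the target group in Theorem \ref{thm5_1}. For the converse, when $W_\sph\neq\{1\}$, the idea is to construct an explicit discontinuous surjective homomorphism onto $W$ from a locally compact Hausdorff group.

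For the first implication, suppose $W_\sph=\{1\}$. The group $W$ is finitely generated, hence countable. By Corollary \ref{corl3_5}\,(1), $W$ has no non-trivial finite normal subgroup. Next, $W$ is linear via the canonical representation $\rho\colon W\to\GL(V)$, hence residually finite by Malcev's theorem; since residual finiteness passes to subgroups and $\Q$ admits no non-trivial finite quotient, $W$ contains no subgroup isomorphic to $\Q$. Finally, one argues that every torsion subgroup of $W$ is finite: every finite subgroup of $W$ is conjugate into one of the finitely many spherical standard parabolic subgroups, so there is an integer $e\ge 1$ with $g^e=1$ for every torsion element $g\in W$; thus every torsion subgroup of $W$ has exponent dividing $e$, and a linear group of bounded exponent is finite by the Burnside--Schur theorem. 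With the three hypotheses verified, Theorem \ref{thm5_1} shows that every epimorphism $\varphi\colon L\twoheadrightarrow W$ from a locally compact Hausdorff group $L$ is continuous, so $W$ is almost lcH-slender.

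For the converse, suppose $W_\sph\neq\{1\}$ and set $N:=W_\sph$, a non-trivial finite group which is a direct factor of $W=(W_\gen\times W_\aff)\times N$. Fix a non-principal ultrafilter $\mathcal{U}$ on $\N$ and consider the compact Hausdorff topological group $K:=N^\N$. The quotient onto the ultrapower gives a surjective homomorphism $\psi\colon N^\N\to N^\N/\mathcal{U}$, where $N^\N/\mathcal{U}\cong N$ because $N$ is finite, with kernel $\{(x_n)_n\mid\{n\mid x_n=1\}\in\mathcal{U}\}$. This kernel has empty interior: a basic open neighbourhood of the identity has the form $U_F=\{(x_n)_n\mid x_n=1\text{ for }n\in F\}$ with $F$ finite, and choosing $(x_n)\in U_F$ with $x_n\neq 1$ for all $n\notin F$ shows $U_F\not\subseteq\Ker(\psi)$; hence $\psi$ is discontinuous. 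One then sets $L:=(W_\gen\times W_\aff)\times K$, which is locally compact Hausdorff because $W_\gen\times W_\aff$ is discrete and $K$ is compact, and lets $\varphi:=\id\times\psi\colon L\to(W_\gen\times W_\aff)\times N=W$. This is a surjective homomorphism, and its kernel $\{1\}\times\Ker(\psi)$ is not open, since its image under the open projection $L\to K$ equals $\Ker(\psi)$; hence $\varphi$ is not continuous and $W$ is not almost lcH-slender. Alternatively, this implication can be read off from \cite[Theorem B]{KeppelerMoellerVarghese}, by which a countable almost lcH-slender group has no non-trivial finite normal subgroup.

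The step I expect to require the most care is the verification of the hypotheses of Theorem \ref{thm5_1} in the first implication, and within it the finiteness of torsion subgroups of $W$, which combines sub-conjugacy of finite subgroups into spherical parabolic subgroups with linearity of $W$ and the Burnside--Schur theorem; the remaining verifications, and the construction in the converse, are routine.
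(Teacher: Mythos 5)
Your proof is correct and follows essentially the same route as the paper: for $W_\sph=\{1\}$ it verifies the hypotheses of Theorem \ref{thm5_1} using Corollary \ref{corl3_5}\,(1) together with linearity and residual finiteness of $W$, and for $W_\sph\neq\{1\}$ it produces a discontinuous surjection from $(W_\gen\times W_\aff)\times\prod_{\N}W_\sph$ onto $W$ that is the identity on the first factor. The only differences are that you make explicit (via a non-principal ultrafilter, and via bounded exponent plus Burnside for torsion subgroups) what the paper handles by citing \cite[Example 4.2.12]{Ribes} and the standard fact that finitely generated linear groups have finite torsion subgroups.
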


\begin{proof}
If $W_\sph$ is non-trivial, then there exists always a discontinuous surjective group homomorphism from the compact group $L:=\prod_{\mathbb{N}}W_\sph$ into $W_\sph$ (see \cite[Example 4.2.12]{Ribes}). 
We denote this epimorphism by $\varphi_\sph$. 
Thus the group homomorphism $$\varphi\colon W_\gen\times W_\aff\times L\to W_\gen\times W_\aff\times W_\sph$$ where $\varphi$ is identity on $W_\gen\times W_\aff$, and $\varphi_{|L}:=\varphi_\sph$ is a surjective discontinuous group homomorphism.

For the other direction, since $W$ is a finitely generated linear group we know that torsion subgroups in $W$ are finite. 
Further, since $W$ is residually finite, $W$ cannot have $\mathbb{Q}$ as a subgroup. 
Thus, if $W_\sph=\left\{1\right\}$ Corollary \ref{corl3_5}\,(i) implies that $W$ does not have non-trivial finite normal subgroups. 
Hence by Theorem \ref{thm5_1} we know that $W$ is almost lcH-slender.
\end{proof}

To show similar results for automorphism groups of Coxeter groups we need the knowledge that torsion subgroups in these automorphism groups are always finite.

Recall, a group $G$ is called \emph{residually $p$-finite} for a prime number $p$ if for every $g\in G$, $g\neq 1$ there exists an epimorphism $\varphi_p\colon G\to F$ where $F$ is a finite $p$-group and $\varphi_p(g)\neq 1$. A direct consequence of residually $p$-finiteness is that every finite order element in a residually  $p$-finite group has $p$-power order. Thus, if $G$ is residually $p$-finite and residually $q$-finite for $p\neq q$, then $G$ is torsion-free.
Further, a group $G$ is called \emph{virtually residually $p$-finite} if there exists a subgroup $H\subseteq G$ of finite index which is residually $p$-finite.

\bigskip\noindent
{\bf Proposition 5.3.}
{\it Let $W$ be a Coxeter group. Then $\Aut(W)$ is virtually torsion-free, in particular every torsion subgroup of $\Aut(W)$ is finite.}

\bigskip\noindent
{\bf Proof.}
It was proven in \cite[p. 91]{Bourb1} that $W$ is linear. Thus, by Platonov's theorem \cite{Platonov} we know that there exist prime numbers $p, q$, $p\neq q$ such that $W$ is virtually residually $p$- and $q$-finite. Further, it was proven in \cite[Proposition 2]{Lubotzky} that the automorphism group of a finitely generated virtually residually $p$-finite group is also virtually residually $p$-finite. Using this fact we know that $\Aut(W)$ is virtually $p$- and $q$-finite. Hence $\Aut(W)$ is virtually torsion-free and therefore every torsion subgroup in $\Aut(W)$ is finite.

\qed

\bigskip\noindent
{\bf Corollary 5.4}
{\it Let $W$ be a Coxeter group. We decompose $W$ as a direct product $W_\gen\times W_\aff\times W_\sph$.
	If $W_\sph=\left\{1\right\}$, then $\Aut(W)$ is almost lcH-slender.}

\bigskip\noindent
{\bf Proof.}
It was proven in \cite{Baumslag} that the automorphism group of a finitely generated residually finite group is itself residually finite. Thus $\Aut(W)$ is residually finite and therefore can not contain $\mathbb{Q}$ as a subgroup.
By assumption, the spherical part of $W$ is trivial, thus Corollary 4.9(i) implies that $\Aut(W)$ has no non-trivial finite normal subgroups. Combining these results with Proposition 5.3, all the assumptions on $\Aut(W)$ which needed in Theorem 5.1 are satisfied, hence $\Aut(W)$ is almost lcH-slender. 

\qed


\end{document}